\setlist[enumerate]{leftmargin=1.5em}
\setlist[itemize]{leftmargin=1.5em}
\definecolor{green}{rgb}{0,0.8,0} 
\newtheorem{theorem}{Theorem}[section]
\newtheorem{lemma}[theorem]{Lemma}
\newtheorem{proposition}[theorem]{Proposition}
\theoremstyle{definition}
\theoremstyle{remark}
\newtheorem{remark}[theorem]{Remark}
\numberwithin{equation}{section}
\newcommand{\nrm}[1]{\Vert#1\Vert}
\newcommand{\nnrm}[1]{{\vert\kern-0.25ex\vert\kern-0.25ex\vert #1 
    \vert\kern-0.25ex\vert\kern-0.25ex\vert}}
\newcommand{\rd}{\partial}
\newcommand{\nb}{\nabla}
\newcommand{\alp}{\alpha}
\newcommand{\Lmb}{\Lambda}
\newcommand{\Omg}{\Omega}
\newcommand{\bbR}{\mathbb R}
\newcommand{\bbT}{\mathbb T}
\newcommand{\bbZ}{\mathbb Z}
\newcommand{\calA}{\mathcal A}
\newcommand{\calF}{\mathcal F}
\newcommand{\calM}{\mathcal M}
\newcommand{\R}{\mathbb R}
\newcommand{\T} {\mathbb T}
\newcommand{\pa}{\partial}
\newcommand{\e}{\varepsilon}
\newcommand{\lt}{\left}
\newcommand{\rt}{\right}
\newcommand{\bq}{\begin{equation}}
\newcommand{\eq}{\end{equation}}
\newcommand{\mc}{\mathcal{C}}
\newcommand{\mf}{\mathcal{F}}
\newcommand{\mk}{\mathcal{K}}
\newcommand{\U}{\mathcal{U}}
\newcommand{\lal}{\langle}
\newcommand{\ral}{\rangle}
\def\d{\,\mathrm{d}}
\begin{document}

\title{Relaxation to fractional porous medium equation \\ from Euler--Riesz system}
\author{Young-Pil Choi\thanks{Department of Mathematics, Yonsei University, Seoul 03722, Republic of Korea. E-mail: ypchoi@yonsei.ac.kr} \and In-Jee Jeong\thanks{Department of Mathematics, Korea Institute for Advanced Study, Seoul 02455, Republic of Korea. E-mail: ijeong@kias.re.kr}}
\date{\today}



\maketitle


\begin{abstract}
{We perform asymptotic analysis for the Euler--Riesz system posed in either $\T^d$ or $\R^d$ in the high-force regime and establish a quantified relaxation limit result from the Euler--Riesz system to the fractional porous medium equation. We provide a unified approach for asymptotic analysis regardless of the presence of pressure, based on the modulated energy estimates, the Wasserstein distance of order $2$, and the bounded Lipschitz distance.}
\end{abstract}


\section{Introduction}

\subsection{The systems}

In the current work, we are interested in the asymptotic analysis for the following damped Euler--Riesz system corresponding to the high-force regime:
\begin{align}\label{eq:ER}
\left\{
\begin{aligned}
&\rd_t \rho^{(\e)} + \nb \cdot (\rho^{(\e)} u^{(\e)}) = 0, \\
&\rd_t (\rho^{(\e)} u^{(\e)}) + \nabla \cdot ( \rho^{(\e)} u^{(\e)}\otimes u^{(\e)}) +  \frac1\e c_p \nb p(\rho^{(\e)}) =  - \frac1\e \rho^{(\e)} u^{(\e)} + \frac1\e c_K \rho^{(\e)} \nb \Lmb^{\alp-d}\rho^{(\e)},
\end{aligned}
\right.
\end{align}
where $\rho^{(\e)}(t,\cdot) : \Omg \rightarrow \bbR_{+} $ and $u^{(\e)}(t,\cdot) : \Omg \rightarrow \bbR^d$ denote the density and the velocity of the fluid, respectively. The pressure $p(\rho^{(\e)})$ is given by the power-law $p(\rho) = \rho^\gamma$, for some $\gamma \geq 1$. 
Here, the domain is either $\Omg = \bbR^d$ or $\bbT^d$ and we consider the range $-2<\alp-d<0$ for the fractional Laplacian operator $\Lmb^{\alp-d} = (-\Delta)^{\frac{\alp-d}{2}}$. The case $\alp-d=-2$ corresponds to Coulomb interaction, and we shall refer to the range $-2<\alp-d<0$ as Riesz interaction. Lastly, $c_P \ge 0$ and $c_K \in \R$ are coefficients representing the strength of the pressure and Riesz interaction force, respectively. 

The system \eqref{eq:ER} has been recently investigated in \cite{CJpre,Ser20}. A rigorous derivation of the system \eqref{eq:ER} from interacting particle systems by means of {\it mean-field limits} is established in \cite{Ser20} under suitable regularity assumptions on the solutions of \eqref{eq:ER}. In \cite{CJpre}, the local-in-time existence and uniqueness of classical solutions to the system \eqref{eq:ER} without the linear damping under suitable regularity assumptions on the initial data are established. It is clear that the existence theory developed in \cite{CJpre} can be directly applied to the system \eqref{eq:ER}. 

Let us briefly explain how the system \eqref{eq:ER} behaves when $\e$ vanishes. Define the free energy $\mf: L^1_+(\Omg) \to \R$ for the system \eqref{eq:ER} by
\bq\label{def_free}
\mf(\rho^{(\e)}) := c_P \int_\Omg \U(\rho^{(\e)})\,dx - \frac{c_K}2 \int_\Omg \rho^{(\e)} \Lmb^{\alp-d}\rho^{(\e)}\,dx,
\eq
where $\U:L^1(\Omg) \to \R$ is an increasing function describing the internal energy of the density given by
\[
\U(\rho) = \left\{ \begin{array}{ll}
 \rho \ln \rho & \textrm{if $\gamma=1$,}\\[2mm]
\displaystyle \frac{1}{\gamma-1} \rho^\gamma & \textrm{if $\gamma > 1$}.
 \end{array} \right.
\]
Here $L^1_+(\Omg)$ stands for the set of nonnegative $L^1(\Omg)$ functions. Then we can rewrite the momentum equations in \eqref{eq:ER} as
\bq\label{der_formal}
\e \rd_t (\rho^{(\e)} u^{(\e)}) + \e \nabla \cdot ( \rho^{(\e)} u^{(\e)}\otimes u^{(\e)}) = - \rho^{(\e)} \nabla \frac{\delta \mf (\rho^{(\e)})}{\delta \rho^{(\e)}}  -  \rho^{(\e)} u^{(\e)},
\eq
where $\frac{\delta \mf(\rho)}{\delta \rho}$ is the variational derivative of the free energy $\mf$ with respect to $\rho$, that is
\[
 \frac{\delta \mf (\rho)}{\delta \rho} = c_P\U'(\rho)  - c_K \Lmb^{\alp-d}\rho.
\]
Thus, at the formal level the left hand side of \eqref{der_formal} converges to zero as $\e \to 0$; if $\rho^{(\e)} \to \rho$ and $\rho^{(\e)} u^{(\e)} \to \rho u$ as $\e \to 0$, we deduce from \eqref{eq:ER} the continuity equation which has a {\it gradient flow} structure \cite{JKO98, O01}:
\bq\label{eq:E1}
\pa_t  \rho + \nabla \cdot ( \rho   u) = 0 \quad \mbox{with} \quad   \rho  u =  \rho \nabla \frac{\delta \mf ( \rho)}{\delta  \rho} = \rho (c_P \nabla \U'(\rho) - c_K \nabla \Lmb^{\alpha-d}\rho), 
\eq
which can also be rewritten as the {\it fractional porous medium flow} \cite{CSV13}:
\[
\pa_t \rho + c_K\nabla \cdot (\rho \nabla \Lmb^{\alpha-d} \rho) = c_P \Delta \rho^\gamma.
\]

The main purpose of this work is to make the above formal derivation completely rigorous. More precisely, we will provide a unified approach for the quantitative error estimate between solutions to the equations \eqref{eq:ER} and \eqref{eq:E1}. The high-force limit or strong relaxation limit has been studied for the damped Euler system \cite{CG07,HPW11,JR02,LZ16,MM90},  Euler--Poisson system \cite{Cpre, LT17}, Euler system with nonlocal forces \cite{CCT19,CPW20}. In the present work, we extend the previous results \cite{Cpre, LT17} to the Riesz interaction case. 

\subsection{Methodology}

Our main strategy is based on estimates for the {\it modulated energy}, which is also often called as {\it relative entropy}. Note that the kinetic energy $\mk$ to the system \eqref{eq:ER} is given by
\[
\mk(U) := \frac{|m|^2}{2\rho} \quad \mbox{with} \quad U = \lt( \begin{array}{c} \rho \\ m = \rho u \end{array}\rt).
\]
Then the {\it modulated kinetic energy} is given by
\[
\int_\Omg \mk(U| \bar U)\,dx := \int_\Omg \mk(U) - \mk(\bar U) - D_{\bar\rho, \bar m}\mk(\bar U) (U -  \bar U)\,dx = \frac12\int_\Omg \rho | u -   \bar u|^2\,dx,  
\]
where 
\[
\bar U = \lt( \begin{array}{c} \bar \rho \\  \bar m =  \bar\rho  \bar u \end{array}\rt).
\]
We also introduce the modulated energy associated with the free energy defined in \eqref{def_free}:
\begin{align*}
\mf(\rho | \bar\rho) &:= \mf(\rho) - \mf(\bar\rho) - \int_\Omg \frac{\delta \mf ( \bar\rho)}{\delta  \bar\rho} (\rho - \bar\rho)\,dx\cr
& = c_P \int_\Omg \U(\rho) - \U(\bar\rho) - \U'(\bar\rho)(\rho - \bar\rho)\,dx - \frac{c_K}{2}\int_\Omg (\rho - \bar\rho) \Lmb^{\alpha-d}(\rho - \bar\rho)\,dx,
\end{align*}
where the first term on the right hand side is called {\it modulated internal energy} and the second one is called {\it modulated interaction energy}.

We mainly divide the proof into two cases: pressureless and repulsive case ($c_P = 0$ and $c_K <0$) and pressure and attractive case ($c_P > 0$ and $c_K > 0$). We notice from \cite{CJpre} that the system \eqref{eq:ER} seems to be ill-posed for the pressureless and attractive case, i.e. $c_K >0$ and $c_P = 0$. Let us provide some ideas of the proof.

\medskip

\noindent \textbf{Pressureless case.} In the absence of pressure, the total energy, which is the sum of the kinetic and free energies, is not strictly convex with respect to $\rho$. Thus it is not obvious to have some convergence of $\rho^{(\e)}$ towards $\rho$. Meanwhile, the modulated interaction energy has been employed in \cite{Due16,Ser20} to study the mean-field limits for Riesz-type flows. In particular, the \textit{extension representation} for the fractional Laplacian in the whole space (proposed in \cite{CS07}) is used in \cite{Due16,Ser20}. Motivated from these works, in the whole space case we estimate the modulated interaction energy to show convergence of $\rho^{(\e)}$ towards $\rho$ in some negative Sobolev space. On the other hand, in the periodic domain case, it is unclear how to apply the extension method of \cite{CS07}. To overcome this issue, using Fourier transform and commutator estimates, we obtain a similar type of estimate for the modulated interaction energy under an additional regularity assumption on the velocity fields $u$. 

In addition to convergence of $\rho^{(\e)}$ in some negative Sobolev space, we have a stronger convergence of $\rho$ by employing the Wasserstein distance of order $2$, which is defined by
\[
\d_2(\mu, \nu):= \inf_{\pi \in \Pi(\mu,\nu)}\lt(\iint_{\Omg \times \Omg} |x - y|^2 \,\pi(dx,dy) \rt)^{1/2},
\]
for $\mu,\nu \in \mathcal{P}_2(\Omg)$, where $\Pi(\mu,\nu)$ is the set of all probability measures on $\Omg \times \Omg$ with first and second marginals $\mu$ and $\nu$ and bounded $2$-moments, respectively. Here $\mathcal{P}_2(\Omg)$ is the set of probability measures in $\Omg$ with second moment bounded. Note that $\mathcal{P}_2(\Omg)$ is a complete metric space endowed with the $2$-Wasserstein distance. {We show that the $2$-Wasserstein distance between $\rho^{(\e)}$ and $\rho$ can be controlled by the associated modulated kinetic energy; see Proposition \ref{prop_wt}. Thus the quantitative error bound on the modulated kinetic energy also gives convergence in terms of the $2$-Wasserstein distance between the densities.}

In order to show the convergence of the momentum $\rho^{(\e)} u^{(\e)}$ towards $\rho u$, we use the bounded Lipschitz distance defined by
\[
\d_{BL}(\mu, \nu) := \sup_{\phi \in \calA }\lt|\int_\calA \phi(x)(\mu(dx) - \nu(dx)) \rt|,
\]
where the admissible set $\calA$ of test functions are given by
\[
\calA := \lt\{ \phi : \Omg \to \R : \|\phi\|_{L^\infty} \leq 1 \quad \mbox{and} \quad \|\phi\|_{Lip} := \sup_{x \neq y} \frac{|\phi(x) - \phi(y)|}{|x-y|} \leq 1 \rt\}.
\]
We provide that the bounded Lipschitz distance between the momenta can be bounded by the sum of the $2$-Wasserstein distance between the associated densities and the modulated kinetic energy.

\medskip

\noindent \textbf{Pressure case.} 
With pressure, the repulsive interaction case can be easily taken into account by almost the same arguments as the above, see Section \ref{ssec:rem} (v) below. In the attractive interaction case, it is observed in \cite{CPW20, LT17} that the modulated internal energy plays a crucial role in handling the modulated Coulomb or regular interaction energy. In particular, the attractive Coulomb interaction is considered in \cite{LT17} in the periodic domain. We extend it to cover both the attractive Riesz interaction and the whole space case. Presence of pressure gives convexity of the total energy, and therefore we have the strong convergence of $(\rho^{(\e)}, \rho^{(\e)} u^{(\e)})$ towards $(\rho, \rho u)$ in some $L^p$ space.

\medskip

\noindent \textbf{Notation.} 
Let us introduce a few notations and conventions  used throughout the paper.
Since the total mass is conserved in time (see Lemma \ref{lem_energy} below),  without loss of generality, we assume that $\rho^{(\e)}$ is a probability density function, i.e. $\|\rho^{(\e)}(t,\cdot)\|_{L^1} = 1$ for all $t \geq 0$ and $\e>0$. Moreover, $L^1_2(\Omg)$ represents the space of weighted integrable functions by $1 + |x|^2$ with the norm
\[
\|f\|_{L^1_2} := \int_\Omg (1+|x|^2) f(x)\,dx.
\]
The $L^2$ based Sobolev norms are defined by $\nrm{f}_{\dot{H}^s} = \nrm{\Lmb^sf}_{L^2}$ and $\nrm{f}_{H^s}= \nrm{f}_{\dot{H}^s} + \nrm{f}_{L^2}$.
Finally, we denote by $C$ a generic positive constant, independent of $\e$ and whose value can vary from a line to another.  Now we are ready to state the main result.

\subsection{Main result} 

\begin{theorem}\label{thm_deri} Let $T>0$ and $\gamma \geq 1$. Let $(\rho^{(\e)}, u^{(\e)})$ and $\rho$ be sufficiently regular solutions to the systems \eqref{eq:ER} and \eqref{eq:E1} on the time interval $[0,T]$, respectively. Suppose that 
\bq\label{as1}
\rho^{(\e)},  \rho \in L^\infty(0,T;L^1_2(\Omg)) \quad \mbox{and} \quad u \in W^{1,\infty}((0,T) \times \Omg).
\eq
In the periodic domain case, we assume in addition that $u \in L^\infty(0,T;H^s(\T^d))$ with $s>d/2+1$.
Then we have
\begin{itemize}
\item[(i)] pressureless and repulsive case ($c_P=0$ and $c_K < 0$): There exists $C > 0$ independent of $\e>0$ such that 
\begin{align}\label{res2}
\begin{aligned}
&\sup_{0 \leq t \leq T}\lt(\d_2^2(\rho^{(\e)}(t), \rho(t)) +\|(\rho^{(\e)} -  \rho)(t,\cdot)\|_{\dot{H}^{-\frac{d-\alpha}{2}}}^2  \rt) + \int_0^T \d_{BL}^2((\rho^{(\e)} u^{(\e)})(t), ( \rho   u)(t))\,dt \cr
&\quad \leq C\e \int_\Omg \rho_0^{(\e)} |u_0^{(\e)} -  u_0|^2\,dx + C\d_2^2(\rho_0^{(\e)},  \rho_0) + C \int_\Omg (\rho_0^{(\e)} -  \rho_0)\Lmb^{\alpha - d}(\rho_0^{(\e)} -  \rho_0)\,dx + C\e^2.
\end{aligned}
\end{align}
In particular, if the right hand side of \eqref{res2} converges to zero as $\e \to 0$, then we have
\begin{align*}
\rho^{(\e)} &\to \rho \quad \mbox{in } L^\infty(0,T;\dot{H}^{-\frac{d-\alpha}{2}}(\Omg)) \mbox{ and weakly-$\star$ in } L^\infty(0,T;\calM(\Omg))\cr
\rho^{(\e)}u^{(\e)} &\to \rho u \quad \mbox{weakly-$\star$ in } L^2(0,T;\calM(\Omg)).
\end{align*}
Here we denote by $\calM(\Omg)$ the space of (signed) Radon measures on $\Omg$ with finite mass.

\item[(ii)] pressure and attractive case ($c_P>0$ and $c_K > 0$): In addition, we assume the following integrability conditions for $\rho$ and $\bar\rho$:
\bq\label{condi_rho}
\int_\Omg (\rho^{(\e)})^\gamma\,dx< \infty \ \mbox{ and } \ \int_\Omg  \rho^\gamma\,dx < \infty
\eq
uniformly in $\e>0$. When $\Omg = \R^d$, we furthermore assume that $u$ satisfies $u \in L^\infty(0,T;L^{\gamma/(\gamma-1)}(\Omg))$, where $\gamma \geq 1$ is chosen as $\gamma = 2d/(2d-\alpha)$ and the strength of the attractive interaction force $c_K > 0$ is small enough compared to the pressure-coefficient $c_P > 0$. Then we have
\begin{align}\label{res3}
\begin{aligned}
&\sup_{0 \leq t \leq T}\|(\rho^{(\e)} -  \rho)(t,\cdot)\|_{L^\gamma}^2 + \int_0^T \|(\rho^{(\e)} u^{(\e)} -   \rho   u)(t,\cdot)\|_{L^1}^2\,dt\cr
&\quad \leq C\e\int_\Omg \rho_0^{(\e)} |u_0^{(\e)} -  u_0|^2\,dx + C\int_\Omg \U(\rho_0^{(\e)}| \rho_0)\,dx  + C\e^2,
\end{aligned}
\end{align}
where $C>0$ is independent of $\e>0$.

Similarly as before, if the right hand side of \eqref{res3} converges to zero as $\e \to 0$, then we have
\begin{align*}
\rho^{(\e)} &\to \rho \quad \mbox{a.e. and in } L^\infty(0,T;L^\gamma(\Omg)), \cr
\rho^{(\e)}u^{(\e)} &\to \rho u \quad \mbox{a.e. and in } L^2(0,T;L^1(\Omg))
\end{align*}
as $\e \to 0$.
\end{itemize}
\end{theorem}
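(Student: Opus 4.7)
The strategy is the modulated energy method: introduce
\[
\calE^{(\e)}(t) := \e \int_\Omg \mk(U^{(\e)} \mid \bar U)\,dx + \mf(\rho^{(\e)} \mid \bar \rho),
\]
with $\bar U = (\rho,\rho u)$ built from the limit profile, and derive a differential inequality of Gr\"onwall type for $\calE^{(\e)}$ by testing \eqref{eq:ER} against the natural variations. The damping contribution $-\int \rho^{(\e)}|u^{(\e)}-u|^2\,dx$ arising in $\frac{d}{dt}\calE^{(\e)}$ is the key dissipation: it balances the convective cross terms and provides the time-integrated control of the modulated kinetic energy that eventually becomes $\int_0^T \d_{BL}^2(\rho^{(\e)} u^{(\e)},\rho u)\,dt$ or $\int_0^T \|\rho^{(\e)} u^{(\e)}-\rho u\|_{L^1}^2\,dt$. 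Since $u$ solves \eqref{eq:E1} rather than the momentum equation of \eqref{eq:ER}, inserting it produces a residual forcing of order $\e(\rd_t(\rho u)+\nabla\cdot(\rho u\otimes u))$; Young's inequality against $u^{(\e)}-u$ turns this into the $C\e^2$ term on the right.

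First I would compute $\frac{d}{dt}\calE^{(\e)}$ in parallel for both cases, organizing the right-hand side into (a) a convective remainder bounded by $\|\nb u\|_{L^\infty}\calE^{(\e)}$, (b) a coupling $\int \rho^{(\e)}(u^{(\e)}-u)\cdot \nb \frac{\delta\mf(\bar\rho)}{\delta\bar\rho}\,dx$, (c) the $\e$-order acceleration error, and (d) the damping. The time derivative of $\mf(\rho^{(\e)}\mid\bar\rho)$ obtained from the two continuity equations cancels (b) up to a commutator that measures how $\nb\frac{\delta\mf(\bar\rho)}{\delta\bar\rho}$ is transported by $u$ versus by $u^{(\e)}$. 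The principal difficulty is precisely this commutator in the modulated interaction part: on $\R^d$ it is tamed through the Caffarelli--Silvestre extension in the spirit of \cite{Due16,Ser20}, whereas on $\T^d$ the extension is unavailable and one must instead Fourier-decompose $[\Lmb^{\alp-d},\,u\cdot\nb]$ and bound it by $\|u\|_{H^s}$, which is exactly why the hypothesis $u\in L^\infty(0,T;H^s(\T^d))$ with $s>d/2+1$ is imposed on the torus.

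For case (i) ($c_P=0$, $c_K<0$) the modulated interaction energy equals $\tfrac{|c_K|}{2}\|\rho^{(\e)}-\rho\|_{\dot H^{-(d-\alp)/2}}^2$, so Gr\"onwall directly delivers the first two quantities on the left of \eqref{res2}. The Wasserstein bound then follows from the advertised Proposition~\ref{prop_wt}, which relates $\d_2^2(\rho^{(\e)}(t),\rho(t))$ to the modulated kinetic energy transported along the characteristic flow of $u$, starting from $\d_2^2(\rho_0^{(\e)},\rho_0)$. For the bounded Lipschitz distance of the momenta, I would split
\[
\int_\Omg \phi\cdot(\rho^{(\e)} u^{(\e)}-\rho u)\,dx = \int_\Omg \phi\cdot\rho^{(\e)}(u^{(\e)}-u)\,dx + \int_\Omg (\phi\cdot u)(\rho^{(\e)}-\rho)\,dx,
\]
applying Cauchy--Schwarz with $\|\phi\|_{L^\infty}\le 1$ on the first piece and $\|\phi\cdot u\|_{Lip}\le C\|u\|_{W^{1,\infty}}$ on the second piece so that it is dominated by $\|u\|_{W^{1,\infty}}\d_{BL}(\rho^{(\e)},\rho)\le \|u\|_{W^{1,\infty}}\d_2(\rho^{(\e)},\rho)$; squaring and integrating in time reproduces \eqref{res2}. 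Weak-$\star$ convergence in $L^\infty(0,T;\calM)$ and $L^2(0,T;\calM)$ is a consequence of tightness via the $L^1_2$ assumption in \eqref{as1}.

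For case (ii) ($c_P>0$, $c_K>0$) strict convexity of $\U$ makes the modulated internal energy coercive in $\|\rho^{(\e)}-\rho\|_{L^\gamma}^2$ through the standard modulated-pressure inequality. The modulated interaction energy now carries the wrong sign, but the choice $\gamma=2d/(2d-\alp)$ is exactly the sharp Hardy--Littlewood--Sobolev exponent so that
\[
\Bigl|\int_\Omg (\rho^{(\e)}-\rho)\,\Lmb^{\alp-d}(\rho^{(\e)}-\rho)\,dx\Bigr| \le C_{HLS}\,\|\rho^{(\e)}-\rho\|_{L^\gamma}^2,
\]
and taking $c_K$ sufficiently small relative to $c_P$ absorbs this into the internal-energy part, keeping $\mf(\rho^{(\e)}\mid\rho)$ nonnegative and $L^\gamma$-coercive; this is where the smallness hypothesis on $c_K$ enters. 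Gr\"onwall then yields the first term on the left of \eqref{res3}, and the $L^1$ momentum estimate follows by the same splitting as in (i), now controlled by H\"older with $\rho^{(\e)}-\rho\in L^\gamma$ and $u\in L^{\gamma/(\gamma-1)}$ (hypothesis on $\R^d$, automatic on $\T^d$).
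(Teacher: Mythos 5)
Your plan reproduces the paper's argument essentially step for step: the same modulated (kinetic plus free) energy with the damping term as dissipation, the acceleration residual $e=\rd_t u+(u\cdot\nb)u$ handled by Young's inequality to produce the $C\e^2$, the Caffarelli--Silvestre extension on $\R^d$ versus the Fourier commutator estimate on $\T^d$ for the modulated interaction energy, Proposition~\ref{prop_wt} plus Gr\"onwall for the $2$-Wasserstein bound, the identical splitting for the bounded Lipschitz distance of the momenta, and Hardy--Littlewood--Sobolev with the smallness of $c_K$ to absorb the attractive interaction into the modulated internal energy in case (ii). This is correct and is the same approach as the paper.
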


\subsection{Remarks}\label{ssec:rem}

We give several remarks regarding the main statement above.

\begin{enumerate}
\item[(i)] The required regularities of solutions for Theorem \ref{thm_deri} are obtained in \cite{CJpre21, CJpre} when $\gamma=1$. To be more precise, the local-in-time existence and uniqueness of classical solutions for \eqref{eq:ER} and \eqref{eq:E1} with $\gamma=1$ are obtained in these works. On the other hand, Theorem \ref{thm_deri} can be obtained by using a rather weak regularity of solutions to the system \eqref{eq:ER}, \cite[Definition 3.1]{LT17} for instance, see also \cite{Cpre}. 
\item[(ii)] The finite second moment of $\rho^{(\e)}$ can be easily obtained. In fact, it follows from the continuity equation of \eqref{eq:ER} that
\[
\frac12\frac{d}{dt}\int_\Omg |x|^2 \rho^{(\e)}\,dx = \int_\Omg x \cdot u^{(\e)} \rho^{(\e)}\,dx.
\]
Then applying Young's inequality together with Gr\"onwall's lemma gives
\[
\int_\Omg |x|^2 \rho^{(\e)}\,dx \leq e^T \int_\Omg |x|^2 \rho_0^{(\e)}\,dx + e^T \int_0^t \int_\Omg |u^{(\e)}|^2 \rho^{(\e)}\,dxd\tau.
\]
Since the right hand side can be bounded under the assumption that $\rho^{(\e)} \in L^\infty((0,T) \times \Omg)$ and $u^{(\e)} \in L^2((0,T) \times \Omg)$ (for instance), we have the desired result. It is worth noticing that uniform-in-$\e$ bound is not necessarily required here.
\item[(iii)] One can slightly relax the assumptions \eqref{as1} on solutions in the whole space case. To be more specific, instead of \eqref{as1}, under the assumption that $\pa_t u, \nabla u \in L^\infty((0,T) \times \R^d)$, the error estimate \eqref{res2} can be replaced by
\begin{align*}
&\sup_{0 \leq t \leq T}\|(\rho^{(\e)} -  \rho)(t,\cdot)\|_{\dot{H}^{-\frac{d-\alpha}{2}}}^2 + \int_0^T  \int_{\R^d} \rho^{(\e)}(t,x) |(u^{(\e)} -  u)(t,x)|^2\,dx dt \cr
&\quad \leq C\e \int_{\R^d} \rho_0^{(\e)} |u_0^{(\e)} -  u_0|^2\,dx  + C \int_{\R^d} (\rho_0^{(\e)} -  \rho_0)\Lmb^{\alpha - d}(\rho_0^{(\e)} -  \rho_0)\,dx \cr
&\qquad + C\e^3\lt(\int_{\R^d} \rho_0^{(\e)} |u_0^{(\e)}|^2\,dx + \frac1\e \int_{\R^d} \rho_0^{(\e)} \Lmb^{\alpha-d}\rho_0^{(\e)}\,dx\rt) + C\e^2,
\end{align*}
where $C>0$ is independent of $\e> 0$. Thus we also conclude
\[
\rho^{(\e)} \to \rho \quad \mbox{in } L^\infty(0,T;\dot{H}^{-\frac{d-\alpha}{2}}(\R^d))
\]
as $\e \to 0$. We refer to Remark \ref{rmk_e} for a detailed discussion. A similar argument can be applied to the pressure and attractive case, see Remark \ref{rmk_e2}.
\item[(iv)] Let us comment on the assumption on the uniform-in-$\e$ boundedenss of the internal energy \eqref{condi_rho} for the pressure and attractive case. When $\gamma = 1$, the uniform-in-$\e$ bound assumption \eqref{condi_rho} on $\rho^{(\e)}$ is obvious. For $\gamma > 1$, it follows from Lemma \ref{lem_energy} that 
\[
\frac12\int_\Omg \rho^{(\e)}|u^{(\e)}|^2\,dx + \frac1\e \mf(\rho^{(\e)}) + \frac1\e\int_0^t \int_\Omg \rho^{(\e)}|u^{(\e)}|^2\,dxd\tau = \frac12\int_\Omg \rho_0^{(\e)}|u_0^{(\e)}|^2\,dx + \frac1\e \mf(\rho_0^{(\e)}),
\]
and thus we get
\[
\frac{c_P}{\gamma-1}\int_\Omg (\rho^{(\e)})^\gamma \,dx \leq \frac\e2\int_\Omg \rho_0^{(\e)}|u_0^{(\e)}|^2\,dx +  \mf(\rho_0^{(\e)}) + \frac{c_K}2 \int_\Omg \rho^{(\e)} \Lmb^{\alp-d}\rho^{(\e)}\,dx.
\]
On the other hand, the last term on the right hand side of the above inequality can be estimated as
\[
\frac{c_K}2 \int_\Omg \rho^{(\e)} \Lmb^{\alp-d}\rho^{(\e)}\,dx \leq Cc_K\|\rho^{(\e)}\|_{L^\theta}^2
\]
where  $\theta = 2d/(2d - \alpha)$, due to Hardy--Littlewood--Sobolev inequality (see Lemma \ref{lem_HLS} below). We then use the $L^p$ interpolation inequality to estimate
\[
\|\rho^{(\e)}\|_{L^\theta}^2 \leq \|\rho^{(\e)}\|_{L^1}^{2(1-\beta)}\|\rho^{(\e)}\|_{L^\gamma}^{2\beta} = \|\rho^{(\e)}\|_{L^\gamma}^{2\beta},
\]
where $\beta = \gamma \alpha /(2d(\gamma-1))$. Note that if $1 + \alpha/d \leq \gamma$, then $2\beta \leq \gamma$, and thus
\[
\frac{c_P}{\gamma-1}\int_\Omg (\rho^{(\e)})^\gamma \,dx \leq \frac\e2\int_\Omg \rho_0^{(\e)}|u_0^{(\e)}|^2\,dx +  \mf(\rho_0^{(\e)}) + C c_K (1 - \delta_{2\beta,\gamma}) + Cc_K\int_\Omg (\rho^{(\e)})^\gamma \,dx.
\]
In summary, if $\gamma \geq 1+ \alpha/d$ and $c_P$ is sufficiently large compared to $c_K$, then we have
\[
\int_\Omg (\rho^{(\e)})^\gamma \,dx \leq \frac\e2\int_\Omg \rho_0^{(\e)}|u_0^{(\e)}|^2\,dx +  \mf(\rho_0^{(\e)}) + C c_K (1 - \delta_{2\beta,\gamma}).
\]
\item[(v)] The result of Theorem \ref{thm_deri} (i) can be naturally extended to the pressure and repulsive case without any further difficulties since the free energy $\mf(\rho_0^{(\e)})$ is always nonnegative. Indeed, if $(\rho^{(\e)})^\gamma, \rho^\gamma \in L^\infty(0,T;L^1(\Omg))$, then we have
\begin{align*}
&\sup_{0 \leq t \leq T}\|(\rho^{(\e)} - \rho)(t,\cdot)\|_{L^\gamma}^2 + \int_0^T \|(\rho^{(\e)} u^{(\e)} -  \rho u)(t,\cdot)\|_{L^1}^2\,dt \cr
&\quad \leq C\e\int_\Omg \rho_0^{(\e)} |u_0^{(\e)} -   u_0|^2\,dx + C\int_\Omg \U(\rho_0^{(\e)}|\rho_0)\,dx + C\e^2,
\end{align*}
where $C>0$ is independent of $\e> 0$. 
\end{enumerate}

{
The rest of this paper is organized as follows. In Section \ref{sec:mod_energy}, we provide the modulated kinetic, internal, and interaction energy estimates. Section \ref{sec:proof} is devoted to proving Theorem \eqref{thm_deri}. 
}

%
%
%
%
%
%

\section{Modulated energy estimates}\label{sec:mod_energy}
The goal of this section is to establish modulated energy estimates for the system \eqref{eq:ER}. Before we proceed, let us begin with some standard energy estimates: 
\begin{lemma}\label{lem_energy}Let $T>0$. Let $(\rho^{(\e)}, u^{(\e)})$ be a solution to the system \eqref{eq:ER} on the time interval $[0,T]$ with sufficient regularity. Then we have
\[
\frac{d}{dt}\int_\Omg \rho^{(\e)}\,dx = 0, \quad \frac{d}{dt}\int_\Omg \rho^{(\e)} u^{(\e)}\,dx = - \frac1\e \int_\Omg \rho^{(\e)} u^{(\e)}\,dx, 
\]
and
\[
\frac{d}{dt}\lt( \frac12\int_\Omg \rho^{(\e)}|u^{(\e)}|^2\,dx + \frac1\e \mf(\rho^{(\e)})\rt) +  \frac1\e \int_\Omg \rho^{(\e)} |u^{(\e)}|^2\,dx = 0.
\]
\end{lemma}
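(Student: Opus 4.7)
The plan is to derive each identity by a direct manipulation of the PDE system \eqref{eq:ER}, relying on integration by parts and the self-adjointness of $\Lambda^{\alpha-d}$; throughout I assume enough regularity and decay (either periodicity or sufficient decay at infinity on $\R^d$) to justify all the boundary terms vanishing, which is implicit in the ``sufficient regularity'' hypothesis.

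First I would handle the mass identity by simply integrating the continuity equation $\rd_t\rho^{(\e)} + \nabla\cdot(\rho^{(\e)}u^{(\e)}) = 0$ over $\Omg$: the divergence term integrates to zero. For the momentum identity I would integrate the second equation of \eqref{eq:ER}. The convective term $\nabla\cdot(\rho^{(\e)}u^{(\e)}\otimes u^{(\e)})$ and the pressure term $\nabla p(\rho^{(\e)})$ are exact divergences and so give no contribution. The only non-obvious piece is the Riesz forcing $\int_\Omg \rho^{(\e)}\nabla \Lambda^{\alpha-d}\rho^{(\e)}\,dx$; writing it componentwise and moving one derivative and the operator $\Lambda^{\alpha-d}$ by integration by parts and self-adjointness yields $\int \rho^{(\e)}\partial_i\Lambda^{\alpha-d}\rho^{(\e)}\,dx = -\int \rho^{(\e)}\partial_i\Lambda^{\alpha-d}\rho^{(\e)}\,dx$, forcing the integral to vanish. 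What survives is the linear damping $-\frac1\e\int \rho^{(\e)}u^{(\e)}\,dx$, which gives the claimed ODE.

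For the energy identity I would dot the momentum equation with $u^{(\e)}$ and use the continuity equation to rewrite $\int u^{(\e)}\cdot\rd_t(\rho^{(\e)}u^{(\e)})\,dx + \int u^{(\e)}\cdot\nabla\cdot(\rho^{(\e)}u^{(\e)}\otimes u^{(\e)})\,dx = \frac{d}{dt}\frac{1}{2}\int \rho^{(\e)}|u^{(\e)}|^2\,dx$, the standard manipulation that produces kinetic-energy balance. This leaves
\[
\frac{d}{dt}\frac12\int_\Omg\rho^{(\e)}|u^{(\e)}|^2\,dx = -\frac{c_P}{\e}\int_\Omg u^{(\e)}\cdot\nabla p(\rho^{(\e)})\,dx - \frac{1}{\e}\int_\Omg\rho^{(\e)}|u^{(\e)}|^2\,dx + \frac{c_K}{\e}\int_\Omg \rho^{(\e)}u^{(\e)}\cdot\nabla\Lambda^{\alpha-d}\rho^{(\e)}\,dx.
\]
Separately I would differentiate $\mf(\rho^{(\e)})$ using only the continuity equation. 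For the internal-energy piece, the identity $\rho\U'(\rho)-\U(\rho)=p(\rho)$ (which holds for both $\gamma=1$ and $\gamma>1$) gives $\frac{d}{dt}\int \U(\rho^{(\e)})\,dx = \int u^{(\e)}\cdot\nabla p(\rho^{(\e)})\,dx$. For the interaction piece, self-adjointness of $\Lambda^{\alpha-d}$ yields $\frac{d}{dt}\int \rho^{(\e)}\Lambda^{\alpha-d}\rho^{(\e)}\,dx = 2\int \rho^{(\e)}u^{(\e)}\cdot\nabla\Lambda^{\alpha-d}\rho^{(\e)}\,dx$. Adding $\frac{1}{\e}\frac{d}{dt}\mf(\rho^{(\e)})$ to the kinetic balance cancels both the pressure-work and the Riesz-work terms exactly, leaving precisely the dissipation $\frac{1}{\e}\int \rho^{(\e)}|u^{(\e)}|^2\,dx$.

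I do not anticipate a genuine obstacle: every step is classical. The only items that require care are justifying the vanishing of boundary terms on $\R^d$, which follows from the assumed regularity/integrability, and remembering to use the self-adjointness of $\Lambda^{\alpha-d}$ in both the momentum and energy calculations so that the Riesz contributions either cancel outright or combine cleanly with the time derivative of the interaction part of $\mf$.
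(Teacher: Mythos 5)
Your proof is correct and follows essentially the same route as the paper: the first two identities by direct integration (with the Riesz force integrating to zero by antisymmetry), and the energy identity by combining the kinetic-energy balance with the time derivative of the free energy along the continuity equation, so that the pressure-work and Riesz-work terms cancel. The paper merely writes this cancellation compactly via the variational derivative $\frac{\delta\mf(\rho)}{\delta\rho}$ rather than splitting $\mf$ into its internal and interaction parts, but the computation is identical.
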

\begin{proof} The first two assertions are clear. For the third one, a direct computation gives
\[
\frac12\frac{d}{dt}\int_\Omg \rho^{(\e)}|u^{(\e)}|^2\,dx + \frac1\e \int_\Omg \rho^{(\e)} |u^{(\e)}|^2\,dx = - \frac1\e \int_\Omg \rho^{(\e)} u^{(\e)} \cdot \nabla \frac{\delta \mf (\rho^{(\e)})}{\delta \rho^{(\e)}}\,dx.
\]
We also find
\[
\frac{d}{dt}\mf(\rho^{(\e)}) = \int_\Omg \frac{\delta \mf (\rho^{(\e)})}{\delta \rho^{(\e)}} \pa_t \rho^{(\e)}\,dx = \int_\Omg \rho^{(\e)} u^{(\e)} \cdot \nabla \frac{\delta \mf (\rho^{(\e)})}{\delta \rho^{(\e)}}\,dx.
\]
Combining those two estimates concludes the desired result.
\end{proof}

The main purpose of this section is to prove the following proposition. 
\begin{proposition}\label{prop_deri} Let $T>0$. Let $(\rho^{(\e)}, u^{(\e)})$ and $\rho$ be sufficiently regular solutions to the systems \eqref{eq:ER} and \eqref{eq:E1} with on the time interval $[0,T]$, respectively. Suppose that $\pa_t u, \nabla  u \in L^\infty((0,T) \times \Omg)$, and $\rho^{(\e)} \in L^\infty(0,T;L^1(\Omg))$. Then we have
\begin{align}\label{est_mod}
\begin{aligned}
&\frac{d}{dt}\lt(\frac12\int_\Omg \rho^{(\e)} |u^{(\e)} -  u|^2\,dx + \frac1\e\int_\Omg \calF(\rho^{(\e)}|\rho)\,dx\rt) + \frac1{2\e} \int_\Omg \rho^{(\e)} | u^{(\e)} -  u|^2\,dx\cr
&\quad \leq \frac{c_P C}{\e} (\gamma-1) \int_\Omg \U(\rho^{(\e)} | \rho) \,dx + \frac{Cc_K}{\e}\int_\Omg (\rho^{(\e)} -  \rho)\Lmb^{\alpha - d}(\rho^{(\e)} -  \rho)\,dx   + C\e,
\end{aligned}
\end{align}
where $C>0$ is independent of $\e>0$.
\end{proposition}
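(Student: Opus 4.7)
I will prove \eqref{est_mod} by the standard modulated energy / relative entropy method, computing $\frac{d}{dt}\calE$ piece by piece and then exploiting cancellations between the three components. Setting $v := u^{(\e)}-u$ and $\phi := \rho^{(\e)}-\rho$, the modulated energy reads
\[
\calE = \tfrac12\int_\Omg \rho^{(\e)}|v|^2\,dx + \tfrac{c_P}{\e}\int_\Omg \U(\rho^{(\e)}|\rho)\,dx - \tfrac{c_K}{2\e}\int_\Omg \phi\Lmb^{\alpha-d}\phi\,dx.
\]

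\textbf{The three time derivatives.} For the kinetic part, I use the material-derivative identity $\frac{d}{dt}\int \rho^{(\e)}f\,dx = \int \rho^{(\e)}(\pa_t+u^{(\e)}\cdot\nabla)f\,dx$ (valid by the continuity equation) with $f = \tfrac12 |v|^2$, and substitute the momentum equation through the identity $\rho^{(\e)}(\pa_t+u^{(\e)}\cdot\nabla)u^{(\e)} = \pa_t(\rho^{(\e)}u^{(\e)}) + \nabla\cdot(\rho^{(\e)}u^{(\e)}\otimes u^{(\e)})$. One integration by parts in the pressure term yields
\begin{align*}
\frac{d}{dt}\!\int \tfrac12 \rho^{(\e)}|v|^2 dx + \frac{1}{\e}\!\int \rho^{(\e)}|v|^2 dx &= \frac{c_P}{\e}\!\int p(\rho^{(\e)})\nabla\!\cdot\! v\,dx - \frac{1}{\e}\!\int \rho^{(\e)}u\!\cdot\! v\,dx + \frac{c_K}{\e}\!\int \rho^{(\e)}v\!\cdot\!\nabla\Lmb^{\alpha-d}\rho^{(\e)}\,dx\\
&\quad -\!\int \rho^{(\e)}v\otimes v:\nabla u\,dx -\!\int \rho^{(\e)}v\!\cdot\![\pa_t u+(u\!\cdot\!\nabla)u]\,dx.
\end{align*}
For the internal part, using both continuity equations and $\rho\nabla\U'(\rho)=\nabla p(\rho)$, and writing $p(\rho^{(\e)}|\rho) := p(\rho^{(\e)})-p(\rho)-p'(\rho)\phi$, one obtains
\[
\frac{d}{dt}\!\int \U(\rho^{(\e)}|\rho)\,dx = -\!\int p(\rho^{(\e)})\nabla\!\cdot\! v\,dx -\!\int p(\rho^{(\e)}|\rho)\nabla\!\cdot\! u\,dx -\!\int \rho^{(\e)}v\!\cdot\!\nabla\U'(\rho)\,dx.
\]
For the interaction part, self-adjointness of $\Lmb^{\alpha-d}$ together with $\rho^{(\e)}u^{(\e)}-\rho u = \rho^{(\e)}v + \phi u$ gives
\[
\tfrac12\frac{d}{dt}\!\int \phi\Lmb^{\alpha-d}\phi\,dx = -\!\int \nabla\Lmb^{\alpha-d}\phi\!\cdot\!\rho^{(\e)}v\,dx -\!\int \nabla\Lmb^{\alpha-d}\phi\!\cdot\!\phi u\,dx.
\]

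\textbf{Cancellations.} Summing the kinetic identity, $\frac{c_P}{\e}$ times the internal identity, and $-\frac{c_K}{\e}$ times the interaction identity, the terms $\pm\frac{c_P}{\e}\int p(\rho^{(\e)})\nabla\!\cdot\! v\,dx$ cancel directly. After splitting $\nabla\Lmb^{\alpha-d}\rho^{(\e)} = \nabla\Lmb^{\alpha-d}\phi + \nabla\Lmb^{\alpha-d}\rho$ in the kinetic Riesz term, its $\nabla\Lmb^{\alpha-d}\phi$ contribution cancels against the corresponding term from the interaction identity. The remaining $O(1/\e)$ terms linear in $v$ collapse to
\[
\frac{1}{\e}\!\int \rho^{(\e)}v\cdot\bigl[-u + c_K\nabla\Lmb^{\alpha-d}\rho - c_P\nabla\U'(\rho)\bigr]\,dx = 0,
\]
where the vanishing uses the pointwise identity $u = -c_P\nabla\U'(\rho) + c_K\nabla\Lmb^{\alpha-d}\rho$ coming from \eqref{eq:E1} (equivalently, from the porous-medium form $\pa_t \rho + c_K\nabla\cdot(\rho\nabla\Lmb^{\alpha-d}\rho) = c_P\Delta\rho^\gamma$). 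What survives is
\begin{align*}
\frac{d\calE}{dt} + \frac{1}{\e}\!\int \rho^{(\e)}|v|^2\,dx = &-\frac{c_P}{\e}\!\int p(\rho^{(\e)}|\rho)\nabla\!\cdot\! u\,dx -\frac{c_K}{\e}\!\int \nabla\Lmb^{\alpha-d}\phi\!\cdot\!\phi u\,dx\\
&-\!\int \rho^{(\e)}v\otimes v:\nabla u\,dx - \!\int \rho^{(\e)}v\!\cdot\![\pa_t u+(u\!\cdot\!\nabla)u]\,dx.
\end{align*}

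\textbf{Estimation and the main obstacle.} The pointwise identity $p(\rho^{(\e)}|\rho) = (\gamma-1)\U(\rho^{(\e)}|\rho)$ (zero for $\gamma = 1$) directly bounds the first right-hand term by $\frac{c_P(\gamma-1)\|\nabla u\|_{L^\infty}}{\e}\int\U(\rho^{(\e)}|\rho)\,dx$. The Gronwall term is controlled by $\|\nabla u\|_{L^\infty}\int \rho^{(\e)}|v|^2\,dx$ and absorbed into a quarter of the damping $\frac{1}{\e}\int\rho^{(\e)}|v|^2 dx$ for $\e$ small enough, while the forcing term is handled by Young's inequality $|\int\rho^{(\e)}v\cdot F\,dx|\leq \frac{1}{4\e}\int\rho^{(\e)}|v|^2\,dx + C\e\|F\|_{L^\infty}^2$, using $\|\rho^{(\e)}\|_{L^1}=1$ together with $\pa_t u,\nabla u\in L^\infty$; these two absorptions leave $\frac{1}{2\e}\int \rho^{(\e)}|v|^2\,dx$ as the residual damping on the left and contribute the $C\e$ on the right. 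The remaining Riesz integral $\int \nabla\Lmb^{\alpha-d}\phi\cdot\phi u\,dx$ is the main technical difficulty: splitting it into a symmetric part $-\tfrac12\int \phi\Lmb^{\alpha-d}\phi\,\nabla\!\cdot\! u\,dx$ (directly bounded by $\|\nabla u\|_{L^\infty}\int\phi\Lmb^{\alpha-d}\phi\,dx$) and an antisymmetric commutator $\tfrac12\int u\cdot[\phi\nabla\Lmb^{\alpha-d}\phi - \Lmb^{\alpha-d}\phi\,\nabla\phi]\,dx$, the latter is bounded by the same quantity via the Caffarelli--Silvestre extension representation of $\Lmb^{\alpha-d}$ on $\bbR^d$, or Fourier-based commutator estimates on $\bbT^d$ that require the additional regularity $u\in H^s$ with $s>d/2+1$ --- precisely the source of that assumption in Theorem \ref{thm_deri}.
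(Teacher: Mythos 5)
Your proposal follows essentially the same route as the paper: the same modulated energy, the same cancellation of the pressure-flux and Riesz-flux terms, the same use of the constitutive relation $u=-\nabla\tfrac{\delta\mf(\rho)}{\delta\rho}$ to kill the remaining $O(1/\e)$ terms linear in $u^{(\e)}-u$, the identity $p(\rho^{(\e)}|\rho)=(\gamma-1)\U(\rho^{(\e)}|\rho)$ for the internal part, and Young's inequality with the damping for the acceleration term. Two points deserve correction. First, a sign slip: since $\pa_t\phi=-\nabla\cdot(\rho^{(\e)}u^{(\e)}-\rho u)$, one integration by parts gives $\tfrac12\tfrac{d}{dt}\int\phi\Lmb^{\alpha-d}\phi\,dx=+\int\nabla\Lmb^{\alpha-d}\phi\cdot(\rho^{(\e)}v+\phi u)\,dx$; your displayed identity has the opposite sign, although your final summed identity is the correct one, so this does not propagate.

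Second, and more substantively, the claim that the symmetric piece $-\tfrac12\int\phi\Lmb^{\alpha-d}\phi\,\nabla\cdot u\,dx$ is ``directly bounded by $\|\nabla u\|_{L^\infty}\int\phi\Lmb^{\alpha-d}\phi\,dx$'' is false as stated: the integrand $\phi\,\Lmb^{\alpha-d}\phi$ is not pointwise nonnegative (only its integral is), so the sup-norm of $\nabla\cdot u$ cannot be pulled out. This is precisely the reason the paper introduces the Caffarelli--Silvestre extension (Lemma \ref{lem_modp}), which rewrites the modulated interaction energy as $\iint|\xi|^{\zeta}|\nabla_{(x,\xi)}K\star(\phi\otimes\delta_0)|^2$, a pointwise nonnegative density against which $L^\infty$ bounds on $\nabla u$ can legitimately be applied; both the ``symmetric'' and the ``commutator'' pieces of $\int\nabla\Lmb^{\alpha-d}\phi\cdot\phi u\,dx$ must be estimated in the extended variables (Lemma \ref{lem:K-R-d}), and on $\T^d$ the whole term is treated as a single commutator $\Lmb^{-b}\nabla\cdot(ug)-u\cdot\nabla\Lmb^{-b}g$ requiring $u\in H^s$, $s>d/2+1$ (Lemma \ref{lem_per}). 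You correctly name these tools for the antisymmetric half, but the symmetric half needs them just as much; as written, that step would fail.
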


\begin{remark} The integral version of the modulated energy estimate can be also obtained by using the weak energy inequality, see \cite[Definition 3.1]{LT17} for instance.
\end{remark}

\begin{remark} In addition, if we assume $u \in L^\infty((0,T) \times \Omg)$, then the kinetic energy term, the third term on the right hand side of \eqref{est_mod}, will not appear. See Remark \ref{rmk_e} for details.
\end{remark}

%
%
%
%
%
%

\subsection{Modulated internal energy}

We first estimate the modulated internal energy:
\begin{align*}
\int_\Omg \U(\rho^{(\e)} |  \rho)\,dx &= \int_\Omg \U(\rho^{(\e)}) - \U(\rho) - \U'(\rho) (\rho^{(\e)} -  \rho)\,dx \cr
& = \left\{ \begin{array}{ll}
\displaystyle \int_\Omg \rho^{(\e)} \ln \rho^{(\e)} - \rho \ln \rho + (\rho - \rho^{(\e)})(1 + \ln \rho) \,dx& \textrm{if $\gamma=1$,}\\[2mm]
\displaystyle \frac{1}{\gamma-1} \int_\Omg   (\rho^{(\e)})^\gamma - \rho^\gamma + \gamma(\rho - \rho^{(\e)}) \rho^{\gamma-1} \, dx & \textrm{if $\gamma > 1$}.
 \end{array} \right.
\end{align*}
By Taylor's theorem, we can easily have the following lemma. 
\begin{lemma}[Lower bounds on the modulated internal energy]\label{lem_U} Let $\gamma \geq 1$. For any $\rho^{(\e)}, \rho\in (0,\infty)$, we have
\[
 \U(\rho^{(\e)}| \rho) \geq \frac\gamma2 \min\{(\rho^{(\e)})^{\gamma-2}, \rho^{\gamma-2} \}|\rho^{(\e)} -  \rho|^2.
\]
\end{lemma}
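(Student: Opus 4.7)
The plan is to reduce the inequality to a one-variable Taylor expansion of the internal-energy density $\U$, since Bregman-type divergences of a convex scalar function are controlled by its second derivative.

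First I would verify that, uniformly in the range $\gamma \geq 1$, the function $\U$ defined in the excerpt is $C^2$ on $(0,\infty)$ with
\[
\U''(s) = \gamma\, s^{\gamma-2},
\]
the $\gamma=1$ case giving $\U''(s) = 1/s$, consistent with the formula. Next, for fixed $\rho^{(\e)},\rho>0$, Taylor's theorem with Lagrange remainder applied to the $C^2$ function $\U$ between $\rho$ and $\rho^{(\e)}$ produces some $\xi$ strictly between $\rho^{(\e)}$ and $\rho$ such that
\[
\U(\rho^{(\e)}) - \U(\rho) - \U'(\rho)(\rho^{(\e)} - \rho) = \tfrac{1}{2}\U''(\xi)(\rho^{(\e)} - \rho)^2.
\]
The left-hand side is exactly $\U(\rho^{(\e)}|\rho)$, so
\[
\U(\rho^{(\e)}|\rho) = \tfrac{\gamma}{2}\,\xi^{\gamma-2}(\rho^{(\e)} - \rho)^2.
\]

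It remains to bound $\xi^{\gamma-2}$ from below by $\min\{(\rho^{(\e)})^{\gamma-2},\rho^{\gamma-2}\}$. The map $s\mapsto s^{\gamma-2}$ on $(0,\infty)$ is monotone: nondecreasing when $\gamma\geq 2$ and nonincreasing when $1\leq \gamma<2$. In either case, $\xi$ lying between $\rho$ and $\rho^{(\e)}$ forces $\xi^{\gamma-2}$ to lie between $\rho^{\gamma-2}$ and $(\rho^{(\e)})^{\gamma-2}$, hence
\[
\xi^{\gamma-2} \geq \min\{(\rho^{(\e)})^{\gamma-2},\rho^{\gamma-2}\},
\]
and the claimed bound follows.

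There is no real obstacle here; the only point requiring a moment of care is keeping the formula $\U''(s)=\gamma s^{\gamma-2}$ uniform across the split definition of $\U$ at $\gamma=1$, and noting that the monotonicity direction of $s\mapsto s^{\gamma-2}$ flips at $\gamma=2$, which is precisely why the statement of the lemma takes the minimum rather than picking a distinguished endpoint.
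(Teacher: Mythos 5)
Your proof is correct and is exactly the argument the paper intends — the paper simply states ``By Taylor's theorem, we can easily have the following lemma'' and omits the details, which are precisely your Lagrange-remainder expansion with $\U''(s)=\gamma s^{\gamma-2}$ and the monotonicity of $s\mapsto s^{\gamma-2}$ between the two endpoints.
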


\begin{lemma}[Temporal derivative of the modulated internal energy]\label{lem_ut}  Let $T>0$. Let $(\rho^{(\e)}, u^{(\e)})$ and $\rho$ be sufficiently regular solutions to the systems \eqref{eq:ER} and \eqref{eq:E1} with on the time interval $[0,T]$, respectively. Then we have
\bq\label{est_U}
\frac{d}{dt} \int_\Omg \U(\rho^{(\e)} | \rho)\,dx =  \int_\Omg \rho^{(\e)} ( u^{(\e)} -  u) \cdot \nabla (\U'(\rho^{(\e)}) - \U'( \rho) )\,dx - (\gamma-1) \int_\Omg \U(\rho^{(\e)} | \rho) \nabla \cdot  u\,dx.
\eq
\end{lemma}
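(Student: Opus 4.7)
My plan is to differentiate $\int_\Omg \U(\rho^{(\e)}|\rho)\,dx$ pointwise in time, substitute both continuity equations, and then reorganize via integration by parts together with an algebraic identity for the ``relative pressure'' $P(\rho) := \rho \U'(\rho) - \U(\rho) = \rho^\gamma$.

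A direct chain-rule computation, in which the $-\U'(\rho)\rd_t\rho$ terms coming from $-\U(\rho)$ and from $-\U'(\rho)(\rho^{(\e)}-\rho)$ cancel, gives
\[
\rd_t \U(\rho^{(\e)}|\rho) = \big(\U'(\rho^{(\e)}) - \U'(\rho)\big)\,\rd_t \rho^{(\e)} - \U''(\rho)(\rho^{(\e)} - \rho)\,\rd_t \rho.
\]
Substituting $\rd_t \rho^{(\e)} = -\nb \cdot (\rho^{(\e)} u^{(\e)})$ and $\rd_t \rho = -\nb \cdot (\rho u)$ and integrating over $\Omg$ (the regularity/decay assumptions make the boundary terms vanish on both $\bbR^d$ and $\T^d$), integration by parts converts the first divergence into $\int_\Omg \rho^{(\e)} u^{(\e)}\cdot \nb(\U'(\rho^{(\e)}) - \U'(\rho))\,dx$. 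The split $\rho^{(\e)} u^{(\e)} = \rho^{(\e)}(u^{(\e)}-u) + \rho^{(\e)} u$ produces the first term on the right-hand side of \eqref{est_U} plus a residual $\int_\Omg \rho^{(\e)} u \cdot \nb(\U'(\rho^{(\e)}) - \U'(\rho))\,dx$.

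The main step is then to show that this residual, combined with $\int_\Omg \U''(\rho)(\rho^{(\e)} - \rho)\nb\cdot(\rho u)\,dx$, equals $-(\gamma - 1)\int_\Omg \U(\rho^{(\e)}|\rho)\,\nb \cdot u\,dx$. Using $\rho \U''(\rho) = P'(\rho)$, which follows from differentiating $P = \rho \U' - \U$, I rewrite $\rho^{(\e)} u \cdot \nb\U'(\rho^{(\e)}) = u \cdot \nb P(\rho^{(\e)})$ and integrate by parts; expanding $\nb \cdot (\rho u) = u \cdot \nb \rho + \rho\, \nb \cdot u$, the cross-terms involving $u \cdot \nb \rho$ cancel cleanly, and what survives is the integral of $-P(\rho^{(\e)}|\rho)\,\nb \cdot u$, where $P(\rho^{(\e)}|\rho) := P(\rho^{(\e)}) - P(\rho) - P'(\rho)(\rho^{(\e)} - \rho)$. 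The lemma then follows from the algebraic identity
\[
P(\rho^{(\e)}|\rho) = (\gamma - 1)\,\U(\rho^{(\e)}|\rho),
\]
valid for all $\gamma \geq 1$: for $\gamma > 1$ this is immediate since $P = (\gamma - 1)\U$, while for $\gamma = 1$ both sides vanish (the left side because $P(\rho) = \rho$ is affine).

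The proof is essentially algebraic bookkeeping, so I do not expect a real obstacle. The only analytic point worth noting is the justification of each integration by parts, which is covered by the ``sufficient regularity'' hypothesis; in the torus case no decay is needed, and in the whole space case the assumptions $\rho^{(\e)},\rho \in L^\infty_t L^1_2$ together with $u \in W^{1,\infty}$ are more than enough to discard boundary contributions.
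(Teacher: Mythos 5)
Your proposal is correct and follows essentially the same route as the paper: differentiate under the integral, insert both continuity equations, integrate by parts, split $\rho^{(\e)}u^{(\e)} = \rho^{(\e)}(u^{(\e)}-u)+\rho^{(\e)}u$, and show the residual equals $-(\gamma-1)\int_\Omg\U(\rho^{(\e)}|\rho)\,\nb\cdot u\,dx$. The only cosmetic difference is that you package the last step through the relative-pressure identity $P(\rho^{(\e)}|\rho)=(\gamma-1)\,\U(\rho^{(\e)}|\rho)$, which treats $\gamma=1$ and $\gamma>1$ uniformly, whereas the paper uses $\rho\U'(\rho)=\gamma\U(\rho)$ and $\rho\U''(\rho)=(\gamma-1)\U'(\rho)$ for $\gamma>1$ and handles $\gamma=1$ by a separate cancellation.
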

\begin{proof} We consider two cases: $\gamma > 1$ and $\gamma = 1$. For $\gamma > 1$, we observe that
\[
\U(\rho) = \frac{1}{\gamma-1} \rho^\gamma, \quad \rho \U'(\rho) = \gamma \U(\rho), \quad \mbox{and} \quad \rho\U''(\rho) = (\gamma-1) \U'(\rho).
\]
Then we estimate
\begin{align}\label{est_p}
\begin{aligned}
\frac{d}{dt} \int_\Omg \U(\rho^{(\e)} |  \rho)\,dx
&= \int_\Omg (\U'(\rho^{(\e)}) - \U'( \rho)) \pa_t \rho^{(\e)} - \U''( \rho)(\pa_t  \rho)(\rho^{(\e)} -  \rho)\,dx\cr
&= \int_\Omg \nabla (\U'(\rho^{(\e)}) - \U'(\rho)) \cdot  \rho^{(\e)} u^{(\e)}\,dx + \int_\Omg \U''(\rho)(\nabla \cdot ( \rho  u))(\rho^{(\e)} -  \rho)\,dx\cr
&=\int_\Omg \rho^{(\e)} ( u^{(\e)} -  u) \cdot \nabla (\U'(\rho^{(\e)}) - \U'( \rho) )\,dx  +  \int_\Omg \rho^{(\e)}  u  \cdot\nabla (\U'(\rho^{(\e)}) - \U'( \rho))\,dx \cr
&\quad + \int_\Omg \U''( \rho)(\nabla \cdot ( \rho  u))(\rho^{(\e)} -  \rho)\,dx\cr
&=:   I + I_1 + I_2.
\end{aligned}
\end{align}
Here, 
\begin{align*}
I_1 &= -\int_\Omg (\nabla \rho^{(\e)} \cdot  u + \rho^{(\e)} \nabla \cdot  u)  (\U'(\rho^{(\e)}) - \U'(\rho))\,dx\cr
&=-\int_\Omg \lt(\U'(\rho^{(\e)}) \nabla \rho^{(\e)} - \U'(\rho) \nabla \rho + \U'(\rho) (\nabla \rho - \nabla \rho^{(\e)}) \rt) \cdot u\,dx \cr
&\quad- \int_\Omg \lt(\U'(\rho^{(\e)}) \rho^{(\e)} - \U'( \rho)  \rho + \U'( \rho)( \rho - \rho^{(\e)}) \rt) \nabla \cdot  u\,dx\cr
&=-\int_\Omg \lt( \nabla (\U(\rho^{(\e)}) - \U( \rho)) + \U'( \rho) \nabla( \rho - \rho^{(\e)}) \rt) \cdot  u\,dx\cr
&\quad  - \int_\Omg \lt( \gamma(\U(\rho^{(\e)}) - \U( \rho)) + \U'( \rho)( \rho - \rho^{(\e)}) \rt) \nabla \cdot  u\,dx
\end{align*}
and
\begin{align*}
I_2 &= \int_\Omg \U''( \rho)(\nabla  \rho  \cdot  u +  \rho \nabla \cdot  u)(\rho^{(\e)} -  \rho)\,dx\cr
&=-\int_\Omg (\nabla \U'( \rho) )( \rho - \rho^{(\e)}) \cdot  u\,dx - (\gamma-1)\int_\Omg \U'( \rho)( \rho - \rho^{(\e)}) \nabla \cdot u\,dx.
\end{align*}
This gives
\begin{align*}
I_1 + I_2 &= - \int_\Omg \nabla (\U(\rho^{(\e)} |  \rho))\cdot  u\,dx - \gamma \int_\Omg \U(\rho^{(\e)} | \rho) \nabla \cdot  u\,dx = (1-\gamma) \int_\Omg \U(\rho^{(\e)} | \rho) \nabla \cdot   u\,dx.
\end{align*}
Combining this with \eqref{est_p} asserts \eqref{est_U} for $\gamma > 1$. In case $\gamma = 1$, we easily find
\[
I_1 = \int_\Omg  u \cdot \rho^{(\e)} \nabla (\ln \rho^{(\e)} - \ln  \rho)\,dx = \int_\Omg  u \cdot (\nabla \rho^{(\e)} - \frac{\rho^{(\e)}}{ \rho}\nabla \rho)\,dx
\]
and
\[
I_2 = \int_\Omg \nabla\cdot ( \rho  u) (\frac{\rho^{(\e)}}{ \rho} - 1)\,dx = - \int_\Omg  \rho  u \cdot (\frac{(\nabla \rho^{(\e)}) \rho - \rho^{(\e)} \nabla  \rho}{ \rho^2})\,dx =- I_1.
\]
Thus we also have the estimate \eqref{est_U} when $\gamma = 1$. 
\end{proof}

%
%
%
%
%
%

\subsection{Modulated interaction energy estimate} 

In this part, we discuss the temporal derivative of the modulated interaction energy. More specifically, we provide the following lemma.

\begin{lemma}[Temporal derivative of the modulated interaction energy]\label{lem_it} Let $T>0$. Let $(\rho^{(\e)}, u^{(\e)})$ and $\rho$ be sufficiently regular solutions to the systems \eqref{eq:ER} and \eqref{eq:E1} with  on the time interval $[0,T]$, respectively. Then we have
\begin{align*}
\frac{c_K}2\frac{d}{dt} \int_\Omg (\rho^{(\e)} -  \rho)\Lmb^{\alpha - d}(\rho^{(\e)} -  \rho)\,dx &\leq   c_K\int_\Omg \rho^{(\e)}( u^{(\e)} -  u) \cdot \nabla \Lmb^{\alpha - d}(\rho^{(\e)} - \rho)\,dx\cr
&\quad  + C \int_\Omg (\rho^{(\e)} -  \rho)\Lmb^{\alpha - d}(\rho^{(\e)} -  \rho)\,dx
\end{align*}
for some $C>0$ independent of $\e > 0$.
\end{lemma}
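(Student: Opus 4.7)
The plan is to differentiate the interaction energy in time, invoke the continuity equations to split the result into a ``main term'' of the desired form and a remainder, and then symmetrize the remainder.

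Set $\sigma := \rho^{(\e)} - \rho$. By self-adjointness of $\Lmb^{\alpha - d}$,
\[
\frac{1}{2}\frac{d}{dt}\int_\Omg \sigma \Lmb^{\alpha - d}\sigma\,dx = \int_\Omg (\partial_t \sigma)\Lmb^{\alpha - d}\sigma\,dx.
\]
Since $\rho^{(\e)}$ and $\rho$ both satisfy continuity equations with their respective velocities, $\partial_t \sigma = -\nabla\cdot(\rho^{(\e)}(u^{(\e)} - u)) - \nabla\cdot(\sigma u)$, and two integrations by parts yield
\[
\frac{1}{2}\frac{d}{dt}\int_\Omg \sigma \Lmb^{\alpha - d}\sigma\,dx = \int_\Omg \rho^{(\e)}(u^{(\e)} - u)\cdot \nabla\Lmb^{\alpha - d}\sigma\,dx + R,
\]
where $R := \int_\Omg \sigma u \cdot \nabla \Lmb^{\alpha - d}\sigma\,dx$. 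After multiplication by $c_K$, the first piece is exactly the targeted contribution.

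To handle $R$, I would integrate by parts once more, use $\nabla\cdot(\sigma u) = u\cdot\nabla \sigma + (\nabla\cdot u)\sigma$, and combine self-adjointness of $\Lmb^{\alpha - d}$ with the commutator identity $u\cdot\nabla\Lmb^{\alpha - d}\sigma = \Lmb^{\alpha - d}(u\cdot\nabla\sigma) + [u\cdot\nabla,\,\Lmb^{\alpha - d}]\sigma$ to obtain the symmetrized expression
\[
R = \frac{1}{2}\int_\Omg \sigma\,[u\cdot\nabla,\,\Lmb^{\alpha - d}]\sigma\,dx - \frac{1}{2}\int_\Omg (\nabla\cdot u)\,\sigma\Lmb^{\alpha - d}\sigma\,dx.
\]
The second term is immediately controlled by $\|\nabla u\|_{L^\infty}\int_\Omg \sigma\Lmb^{\alpha - d}\sigma\,dx$, since the bilinear form is non-negative (its Fourier symbol $|\xi|^{\alpha - d}$ is positive).

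The main obstacle is bounding the commutator term by a multiple of $\int \sigma\Lmb^{\alpha-d}\sigma\,dx$, and this is precisely where the split between whole-space and torus flagged in the introduction enters. In the whole space $\Omg = \R^d$, I would use the Caffarelli--Silvestre extension \cite{CS07}, which identifies $\int \sigma\Lmb^{\alpha - d}\sigma\,dx$ with a weighted Dirichlet integral of the $s$-harmonic extension $\Sigma$ of $\sigma$; after lifting $u$ to the half-space and integrating by parts in the extended variables, the commutator is estimated by $\|\nabla u\|_{L^\infty}$ times that Dirichlet integral, in the spirit of \cite{Due16, Ser20}. On $\Omg = \T^d$ the extension route is unavailable, and I would instead pass to Fourier series and apply a Kato--Ponce-type commutator estimate, which is exactly the step that motivates the supplementary assumption $u\in L^\infty(0,T;H^s(\T^d))$ with $s > d/2+1$ imposed in Theorem~\ref{thm_deri}. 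In either case the commutator is $\aleq \int \sigma\Lmb^{\alpha - d}\sigma\,dx$, and multiplying through by $c_K$ delivers the claimed inequality.
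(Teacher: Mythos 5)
Your overall architecture coincides with the paper's: differentiate, use the two continuity equations to isolate the main term plus the remainder $R=\int_\Omg \sigma\, u\cdot\nabla\Lmb^{\alpha-d}\sigma\,dx$ with $\sigma=\rho^{(\e)}-\rho$, and then treat $R$ via the Caffarelli--Silvestre extension on $\R^d$ and via a Fourier-side commutator estimate (using $u\in H^s$, $s>d/2+1$) on $\T^d$. This is exactly the decomposition $J+J_1$ and the two-domain dichotomy of Lemmas \ref{lem:K-R-d} and \ref{lem_per}.

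There is, however, a genuine gap in your handling of the remainder. After symmetrizing you claim that the term $\tfrac12\int_\Omg(\nabla\cdot u)\,\sigma\,\Lmb^{\alpha-d}\sigma\,dx$ is ``immediately controlled by $\|\nabla u\|_{L^\infty}\int_\Omg\sigma\Lmb^{\alpha-d}\sigma\,dx$ since the bilinear form is non-negative.'' Non-negativity of the quadratic form does not give this: the \emph{density} $\sigma(x)\,(\Lmb^{\alpha-d}\sigma)(x)$ is not pointwise non-negative (only its integral is), so you cannot pull $\nabla\cdot u$ out in $L^\infty$. Equivalently, you are implicitly asserting that multiplication by $\nabla\cdot u$ is bounded on $\dot H^{-(d-\alpha)/2}$ under the sole hypothesis $\nabla u\in L^\infty$, which is false; this weighted term is of exactly the same nature as the commutator you defer to the extension/Kato--Ponce step, and it needs the same machinery. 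The paper's proofs are structured precisely to avoid producing such a term: on $\T^d$ the operator is split as $\Lmb^{\alpha-d}=\Lmb^{-b}\Lmb^{-b}$ with $b=(d-\alpha)/2$ and one half is moved onto each factor, so the non-commutator remainder is $-\int(u\cdot\nabla)\Lmb^{-b}g\,\Lmb^{-b}g\,dx=\tfrac12\int(\nabla\cdot u)|\Lmb^{-b}g|^2\,dx$, whose density $|\Lmb^{-b}g|^2$ \emph{is} pointwise non-negative, leaving only the half-order commutator $\Lmb^{-b}\nabla\cdot(ug)-(u\cdot\nabla)\Lmb^{-b}g$ to estimate; on $\R^d$ all integrations by parts are performed in the extended variables, where the energy density $|\xi|^\zeta|\nabla_{(x,\xi)}K\star(\sigma\otimes\delta_0)|^2$ is sign-definite. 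The repair is therefore not to symmetrize in the base space at all: feed the whole of $R$ into the extension (as in Lemma \ref{lem:K-R-d}), or perform the half-order splitting before symmetrizing (as in Lemma \ref{lem_per}). With that correction the rest of your outline goes through.
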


We present the details of the proof of the above lemma by dividing into two cases: $\Omg = \T^d$ or $\Omg = \R^d$. 

\subsubsection{Whole space domain case} We first notice that the Riesz interaction can be rewritten as 
\bq\label{rel_K}
\Lmb^{\alp-d}\rho = K \star \rho,
\eq
where the kernel $K$ is given by
\[
K(x) = \frac{c_{\alpha,d}}{|x|^\alpha} 
\]
for some constant $c_{\alpha,d} > 0$. We then extend it to $\R^d \times \R$ via
\[
\int_{\R^d} K((x,\xi) - (y,0)) \rho(y)\,dy =: (K \star (\rho \otimes \delta_0))(x,\xi),
\]
where we denote
\[
K(x,\xi) := \frac{c_{\alpha,d}}{|(x,\xi)|^\alpha}
\]
see \cite{CS07} for the detailed discussion on the extension problems for the fractional Laplacian. We also refer to \cite{PS17} for the periodic domain case. Then we find that the extended interaction force satisfy
\bq\label{ext_lap}
-\nabla_{(x,\xi)} \cdot \lt(|\xi|^\zeta \nabla_{(x,\xi)} K \star (\rho \otimes \delta_0) \rt) =   \rho(x) \otimes \delta_0(\xi) \quad \mbox{on} \quad \R^d \times \R
\eq
with $\zeta := \alpha + 1 - d \in (-1,1)$ in the sense of distributions.

In the following lemma, motivated from \cite{Due16, PS17, Ser20}, we show that the modulated interaction energy can be expressed in terms of the kernel $K$. 
\begin{lemma}\label{lem_modp} The modulated potential energy can be rewritten as
\[
\int_{\R^d} (\rho^{(\e)} -  \rho)\Lmb^{\alpha - d}(\rho^{(\e)} -  \rho)\,dx = \iint_{\R^d \times \R}  |\xi|^\zeta |\nabla_{(x,\xi)} K \star ((\rho^{(\e)} -  \rho) \otimes \delta_0) (x,\xi)|^2\,dxd\xi.
\]
\end{lemma}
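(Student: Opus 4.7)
Set $\sigma := \rho^{(\e)} - \rho$, which satisfies $\int_{\R^d}\sigma\,dx = 0$ since both densities are probability measures, and define the extension
\[
\Phi(x,\xi) := (K \star (\sigma \otimes \delta_0))(x,\xi) = c_{\alpha,d}\int_{\R^d}\frac{\sigma(y)}{|(x-y,\xi)|^\alpha}\,dy
\]
on $\R^d \times \R$. The plan is to interpret the right-hand side of the claimed identity as a weighted Dirichlet energy for $\Phi$ and apply the extension equation \eqref{ext_lap}, then identify the boundary trace of $\Phi$ on $\{\xi=0\}$ via \eqref{rel_K}.

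The core computation is a single integration by parts. By \eqref{ext_lap}, $\Phi$ solves $-\nabla_{(x,\xi)}\cdot (|\xi|^\zeta \nabla_{(x,\xi)}\Phi) = \sigma \otimes \delta_0$ in $\mathcal{D}'(\R^d \times \R)$. Pairing both sides with $\Phi$ itself and integrating by parts gives
\[
\iint_{\R^d \times \R} |\xi|^\zeta |\nabla_{(x,\xi)}\Phi|^2\,dx\,d\xi = \langle \sigma \otimes \delta_0, \Phi \rangle = \int_{\R^d} \Phi(x,0)\,\sigma(x)\,dx.
\]
Specializing \eqref{rel_K} to the slice $\xi=0$ yields $\Phi(x,0) = c_{\alpha,d}\int_{\R^d} |x-y|^{-\alpha}\sigma(y)\,dy = (\Lmb^{\alpha-d}\sigma)(x)$, so the right-hand side above coincides with $\int_{\R^d} \sigma\,\Lmb^{\alpha-d}\sigma\,dx$, which is the left-hand side of the lemma.

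The main technical obstacle is the rigorous justification of the integration by parts. The weight $|\xi|^\zeta$ is singular at $\xi=0$ whenever $\zeta < 0$, and $\Phi$ is smooth on each half-space $\{\pm\xi>0\}$ but only Hölder continuous across $\{\xi=0\}$. I would regularize by localizing to $\{\delta<|\xi|<R\}\cap\{|x|<R\}$, applying the classical divergence theorem on each of the two smooth pieces, and then passing to the limits $\delta\to 0$ and $R\to\infty$. The boundary flux across $\{\xi=\pm\delta\}$ is $|\xi|^\zeta\rd_\xi\Phi|_{\xi=\pm\delta}$ tested against $\Phi(\cdot,\pm\delta)$; by \eqref{ext_lap} and the continuity of $\Phi$ across $\{\xi=0\}$, this converges to $\int_{\R^d}\Phi(x,0)\sigma(x)\,dx$ as $\delta\to 0$. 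Contributions at $|x|=R$ or $|\xi|=R$ vanish because the zero-mass condition $\int\sigma = 0$ improves the decay of $\Phi$ from $|(x,\xi)|^{-\alpha}$ to $|(x,\xi)|^{-\alpha-1}$, while the finite second moments of $\rho^{(\e)},\rho$ make all constants uniform. This is the standard Caffarelli–Silvestre machinery adapted to the signed measure $\sigma$, as already used in \cite{Due16, PS17, Ser20}.
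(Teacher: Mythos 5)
Your proposal is correct and follows essentially the same route as the paper's proof: both identify the trace of $K\star(\sigma\otimes\delta_0)$ on $\{\xi=0\}$ with $\Lmb^{\alpha-d}\sigma$ via \eqref{rel_K}, substitute the extension equation \eqref{ext_lap}, and integrate by parts against the extension itself; you merely run the chain of identities in the opposite direction. The extra discussion of how to justify the integration by parts near the degenerate weight $|\xi|^\zeta$ and at infinity is a welcome addition that the paper omits.
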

\begin{proof} By \eqref{rel_K} and \eqref{ext_lap}, we find
\begin{align*}
&\int_{\R^d}  (\rho^{(\e)} -  \rho) \Lmb^{\alpha - d}(\rho^{(\e)} -  \rho)\,dx\cr
&\quad = \int_{\R^d}  (\rho^{(\e)} -  \rho) K\star(\rho^{(\e)} -  \rho)\,dx\cr
&\quad =  \iiint_{\R^d \times \R^d \times \R} (\rho^{(\e)} -  \rho)(x)\otimes \delta_0(\xi) K((x,\xi) - (y,0)) (\rho^{(\e)} -  \rho)(y)\,dxdyd\xi\cr
&\quad =  \iint_{\R^d \times \R} (\rho^{(\e)} -  \rho)(x)\otimes \delta_0(\xi)  (K \star ((\rho^{(\e)} - \rho) \otimes \delta_0))(x,\xi)\,dxd\xi\cr
&\quad = - \iint_{\R^d \times \R}\nabla_{(x,\xi)} \cdot \lt(|\xi|^\zeta \nabla_{(x,\xi)} K \star ((\rho^{(\e)} - \rho) \otimes \delta_0)(x,\xi) \rt) (K \star ((\rho^{(\e)} - \rho) \otimes \delta_0))(x,\xi)\,dxd\xi\cr
&\quad = \iint_{\R^d\times \R}  |\xi|^\zeta |\nabla_{(x,\xi)} K \star ((\rho^{(\e)} - \rho) \otimes \delta_0) (x,\xi)|^2\,dxd\xi.
\end{align*}
\end{proof}

In the lemma below, we show the estimate of the temporal derivative of the modulated interaction energy in case $\Omg = \R^d$. 

\begin{lemma}\label{lem:K-R-d} Let $T>0$. Let $(\rho^{(\e)}, u^{(\e)})$ and $ \rho$ be sufficiently regular solutions to the systems \eqref{eq:ER} and \eqref{eq:E1}   on the time interval $[0,T]$, respectively. Then we have
\begin{align*}
\frac{c_K}2\frac{d}{dt} \int_{\R^d} (\rho^{(\e)} -  \rho)\Lmb^{\alpha - d}(\rho^{(\e)} -  \rho)\,dx &\leq   c_K\int_{\R^d} \rho^{(\e)}( u^{(\e)} -  u) \cdot \nabla \Lmb^{\alpha - d}(\rho^{(\e)} -  \rho)\,dx \cr
&\quad + C \int_{\R^d} (\rho^{(\e)} -  \rho)\Lmb^{\alpha - d}(\rho^{(\e)} -  \rho)\,dx,
\end{align*}
where $C>0$ is independent of $\e > 0$, it depends only on $\|\nabla u\|_{L^\infty}$ and $|c_K|$, .
\end{lemma}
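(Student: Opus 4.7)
Set $h := \rho^{(\e)} - \rho$. Subtracting the continuity equations of \eqref{eq:ER} and \eqref{eq:E1} gives $\pa_t h + \nabla \cdot (\rho^{(\e)} u^{(\e)} - \rho u) = 0$. Since $\Lmb^{\alp-d}$ is self-adjoint, differentiating under the integral sign and integrating by parts in $x$ yields
\[
\frac{1}{2}\frac{d}{dt}\int_{\R^d} h\,\Lmb^{\alp-d}h\,dx = \int_{\R^d} (\rho^{(\e)} u^{(\e)} - \rho u)\cdot \nabla\Lmb^{\alp-d}h\,dx.
\]
Writing $\rho^{(\e)} u^{(\e)} - \rho u = \rho^{(\e)}(u^{(\e)} - u) + h\,u$, the first piece (after multiplication by $c_K$) is exactly the first term on the right-hand side of the claim, so the proof reduces to bounding
\[
J := \int_{\R^d} h\, u \cdot \nabla \Lmb^{\alp-d} h\,dx
\]
by a constant multiple (depending on $\|\nabla u\|_{L^\infty}$) of the modulated interaction energy $\int_{\R^d} h\,\Lmb^{\alp-d} h\,dx$.

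For the remainder $J$, I invoke the extension representation of Lemma \ref{lem_modp}. Set $H := K \star (h \otimes \dlt_0)$ on $\R^d \times \R$, which by \eqref{ext_lap} satisfies $-\nabla_{(x,\xi)}\!\cdot\!(|\xi|^\zt \nabla_{(x,\xi)} H) = h \otimes \dlt_0$. Rewriting $J$ as an integral over $\R^d \times \R$ by lifting $h$ to $h\otimes\dlt_0$, and extending $u$ trivially as $\tld u(x,\xi) := (u(x),0)$ so that $u(x)\cdot\nabla_x H(x,\xi) = \tld u(x,\xi)\cdot\nabla_{(x,\xi)}H(x,\xi)$, substitution of the extension relation and an integration by parts in $(x,\xi)$ give
\[
J = \iint_{\R^d \times \R} |\xi|^\zt\,\nabla_{(x,\xi)}H\cdot\nabla_{(x,\xi)}\!\bigl(\tld u\cdot\nabla_{(x,\xi)}H\bigr)\,dx\,d\xi.
\]

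Expanding by the Leibniz rule splits $J$ into two pieces. The piece in which the outer derivative falls on $\tld u$ is controlled pointwise by $\|\nabla u\|_{L^\infty}\,|\xi|^\zt\,|\nabla_{(x,\xi)}H|^2$. For the piece in which the outer derivative falls on $\nabla_{(x,\xi)}H$, a further integration by parts---using crucially that $\tld u$ is independent of $\xi$ and has vanishing last component, so the weight $|\xi|^\zt$ is never differentiated---produces $-\tfrac{1}{2}\iint (\nabla\!\cdot\! u)\,|\xi|^\zt\,|\nabla_{(x,\xi)}H|^2\,dx\,d\xi$. Both contributions are thus dominated by $\|\nabla u\|_{L^\infty}\iint |\xi|^\zt |\nabla_{(x,\xi)}H|^2\,dx\,d\xi$, which by Lemma \ref{lem_modp} equals $\int_{\R^d} h\,\Lmb^{\alp-d}h\,dx$. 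Multiplying by $c_K$ and absorbing $\|\nabla u\|_{L^\infty}$ and $|c_K|$ into $C$ gives the lemma. The main delicate point is the bilinear integration by parts in the extended space: the trivial $\xi$-extension of $u$ is essential so that the degenerate weight $|\xi|^\zt$ is touched only by $x$-derivatives of $\tld u$, which keeps every remainder in the natural weighted Dirichlet form.
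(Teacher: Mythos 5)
Your proof is correct and follows essentially the same route as the paper: the same splitting $\rho^{(\e)}u^{(\e)}-\rho u=\rho^{(\e)}(u^{(\e)}-u)+(\rho^{(\e)}-\rho)u$, the same use of the Caffarelli--Silvestre extension and Lemma \ref{lem_modp} to convert the remainder into the weighted Dirichlet form, and the same two integrations by parts exploiting that $\nabla_{(x,\xi)}\cdot\bigl((u(x),0)|\xi|^{\zeta}\bigr)=|\xi|^{\zeta}\,\nabla\cdot u$. No gaps.
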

\begin{proof} By Lemma \ref{lem_modp}, we estimate
\begin{align*}
&\frac{c_K}2\frac{d}{dt} \int_{\R^d} (K \star (\rho^{(\e)} -  \rho)) (\rho^{(\e)} -  \rho)\,dx \cr
&\quad = c_K\int_{\R^d}  (\rho^{(\e)} u^{(\e)} -  \rho  u) \cdot \nabla K \star (\rho^{(\e)} - \rho) \,dx \cr
&\quad = c_K\int_{\R^d} \rho^{(\e)} ( u^{(\e)} - u) \cdot \nabla K \star (\rho^{(\e)} -  \rho)\,dx - c_K \int_{\R^d} (\rho - \rho^{(\e)})  u \cdot \nabla K \star (\rho^{(\e)} -  \rho)\,dx \cr
&\quad =: J + J_1.
\end{align*}
On the other hand,
\begin{align*}
J_1 &=  c_K\iiint_{\R^d \times \R^d \times \R}( \rho - \rho^{(\e)})(x)\otimes \delta_0(\xi) ( u(x), 0) \cdot \nabla_{(x,\xi)} K((x,\xi)-(y,0)) (\rho^{(\e)} -  \rho)(y)\otimes \delta_0(\xi)\,dxdyd\xi\cr
&= c_K\iint_{\R^d \times \R} (\rho - \rho^{(\e)})(x)\otimes \delta_0(\xi) ( u(x), 0) \cdot(\nabla_{(x,\xi)} K \star (\rho^{(\e)} -  \rho)\otimes \delta_0)(x,\xi)\,dxd\xi\cr
&=  - c_K\iint_{\R^d \times \R} \nabla_{(x,\xi)} \cdot (|\xi|^\zeta \nabla_{(x,\xi)} K \star ((\rho - \rho^{(\e)}) \otimes \delta_0) (x,\xi)) \cr
&\hspace{8cm} ( u(x), 0)\cdot(\nabla_{(x,\xi)} K \star (\rho^{(\e)} -  \rho)\otimes \delta_0)(x,\xi)\,dxd\xi\cr
&=  c_K\iint_{\R^d \times \R}  |\xi|^\zeta \nabla_{(x,\xi)} K \star ((  \rho - \rho^{(\e)}) \otimes \delta_0) (x,\xi) \nabla^2_{(x,\xi)} K \star (\rho^{(\e)} -  \rho)\otimes \delta_0)(x,\xi)(u(x), 0) \,dxd\xi\cr
&\quad + c_K\iint_{\R^d \times \R}  |\xi|^\zeta  \nabla_{(x,\xi)} K \star ((  \rho - \rho^{(\e)}) \otimes \delta_0) (x,\xi) \otimes \nabla_{(x,\xi)} K \star ((\rho^{(\e)} -  \rho) \otimes \delta_0) (x,\xi)\cr
&\hspace{9cm}  : \nabla_{(x,\xi)}(u(x),0)\,dxd\xi\cr
&=: J_{11}+ J_{12},
\end{align*}
where $J_{12}$ can be easily controlled by
\[
|c_K| \|\nabla  u\|_{L^\infty} \iint_{\R^d \times \R}  |\xi|^\zeta | \nabla_{(x,\xi)} K \star ((\rho^{(\e)} -  \rho) \otimes \delta_0) (x,\xi) |^2\,dxd\xi.
\]
For $J_{11}$, by the integration by parts, we get
\begin{align*}
J_{11} &=  - c_K\iint_{\R^d \times \R}  |\xi|^\zeta \nabla_{(x,\xi)} |\nabla_{(x,\xi)} K \star ((\rho^{(\e)} -  \rho) \otimes \delta_0) (x,\xi)|^2 \cdot (u(x), 0) \,dxd\xi\cr
&=  \frac{c_K}2 \iint_{\R^d \times \R}    |\nabla_{(x,\xi)} K \star ((\rho^{(\e)} -  \rho) \otimes \delta_0) (x,\xi)|^2 \nabla_{(x,\xi)} \cdot \lt((u(x), 0)|\xi|^\zeta\rt) \,dxd\xi\cr
&\leq |c_K|\|\nabla  u\|_{L^\infty}\iint_{\R^d \times \R}  |\xi|^\zeta | \nabla_{(x,\xi)} K \star ((\rho^{(\e)} -  \rho) \otimes \delta_0) (x,\xi) |^2\,dxd\xi,
\end{align*}
where we used 
\[
\nabla_{(x,\xi)} \cdot \lt(( u(x), 0)|\xi|^\zeta\rt) = |\xi|^\zeta (\nabla \cdot  u)(x).
\]
Thus we obtain
\[
J_1  \leq 2|c_K|\|\nabla u\|_{L^\infty}\iint_{\R^d \times \R}  |\xi|^\zeta | \nabla_{(x,\xi)} K \star ((\rho^{(\e)} -  \rho) \otimes \delta_0) (x,\xi) |^2\,dxd\xi.
\]
This together with Lemma \ref{lem_modp} concludes the desired result.
\end{proof}

\subsubsection{Periodic domain case}
In this part, we take into account the periodic domain case. It is worth noticing that the method based on the extension representation for the fractional Laplacian would not be applicable to this case.

\begin{lemma}\label{lem_per} Let $T>0$. Let $(\rho^{(\e)}, u^{(\e)})$ and $\rho$ be sufficiently regular solutions to the systems \eqref{eq:ER} and \eqref{eq:E1}  on the time interval $[0,T]$, respectively. Then we have
\begin{align*}
\frac{c_K}2\frac{d}{dt} \int_{\T^d} (\rho^{(\e)} - \rho)\Lmb^{\alpha - d}(\rho^{(\e)} -  \rho)\,dx &\leq   c_K\int_{\T^d} \rho^{(\e)}( u^{(\e)} -  u) \cdot \nabla \Lmb^{\alpha - d}(\rho^{(\e)} -  \rho)\,dx \cr
&\quad + C \int_{\T^d} (\rho^{(\e)} -  \rho)\Lmb^{\alpha - d}(\rho^{(\e)} -  \rho)\,dx
\end{align*}
for some $C>0$ independent of $\e > 0$, which depends on $s$ ($s > d/2+1$),  $\|u\|_{H^s}$, and $|c_K|$.
\end{lemma}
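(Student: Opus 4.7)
The plan is to compute the time derivative of the modulated interaction energy via the continuity equations, isolate the desired term, and bound the remainder by a frequency-space commutator analysis, since the Caffarelli--Silvestre extension used in Lemma \ref{lem:K-R-d} is not available on $\T^d$. Set $\eta := \rho^{(\e)} - \rho$ and $\beta := (d-\alpha)/2 \in (0,1)$, so that $\Lmb^{\alpha-d} = \Lmb^{-2\beta}$. Since $\rho^{(\e)}$ and $\rho$ are probability densities on $\T^d$, $\eta$ has zero mean and $\Lmb^{-2\beta}\eta$ is well-defined. Using $\rd_t\eta = -\nabla\cdot(\rho^{(\e)} u^{(\e)} - \rho u)$ and the self-adjointness of $\Lmb^{\alpha-d}$, a direct computation gives
\[
\frac{c_K}{2}\frac{d}{dt}\int_{\T^d}\eta\,\Lmb^{\alpha-d}\eta\,dx = c_K\int_{\T^d}\rho^{(\e)}(u^{(\e)}-u)\cdot\nabla\Lmb^{\alpha-d}\eta\,dx + c_K\int_{\T^d}\eta\,u\cdot\nabla\Lmb^{\alpha-d}\eta\,dx.
\]
The first term matches the claim, so the remaining task is to bound the second by $C\int\eta\Lmb^{\alpha-d}\eta\,dx$.

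\textbf{Commutator decomposition.} Factor $\Lmb^{-2\beta}=\Lmb^{-\beta}\circ\Lmb^{-\beta}$ and use Plancherel (self-adjointness of $\Lmb^{-\beta}$) to move one factor:
\[
\int_{\T^d}\eta\,u\cdot\nabla\Lmb^{-2\beta}\eta\,dx = \int_{\T^d}\Lmb^{-\beta}(\eta u)\cdot\nabla\Lmb^{-\beta}\eta\,dx.
\]
Write $\Lmb^{-\beta}(\eta u) = u\,\Lmb^{-\beta}\eta + [\Lmb^{-\beta},u]\eta$. The first piece yields the perfect derivative $\tfrac12 u\cdot\nabla|\Lmb^{-\beta}\eta|^2$, which integrates by parts to $-\tfrac12\int(\nabla\cdot u)|\Lmb^{-\beta}\eta|^2\,dx$, bounded by $\tfrac12\|\nabla\cdot u\|_{L^\infty}\int\eta\Lmb^{\alpha-d}\eta\,dx$ (this is harmless since $H^s\hookrightarrow W^{1,\infty}$ for $s>d/2+1$). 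The commutator piece, after another integration by parts and Cauchy--Schwarz, reduces the entire task to the estimate
\[
\bigl\|\nabla\cdot[\Lmb^{-\beta},u]\eta\bigr\|_{L^2(\T^d)} \leq C\|u\|_{H^s(\T^d)}\bigl\|\Lmb^{-\beta}\eta\bigr\|_{L^2(\T^d)}, \qquad s>\tfrac{d}{2}+1.
\]

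\textbf{Proof of the key commutator estimate.} The plan here is Fourier-based. Expand $u=\sum_{k\in\Z^d}\hat u(k)\,e^{ik\cdot x}$; by the triangle inequality it suffices to show that for each plane wave $u_k := \hat u(k)\,e^{ik\cdot x}$,
\[
\bigl\|\nabla\cdot[\Lmb^{-\beta},u_k]\eta\bigr\|_{L^2} \leq C\,|\hat u(k)|\,|k|\,\bigl\|\Lmb^{-\beta}\eta\bigr\|_{L^2}.
\]
A direct Fourier computation shows that the Fourier coefficient at $\xi\in\Z^d\setminus\{0\}$ of $\nabla\cdot[\Lmb^{-\beta},u_k]\eta$ equals $i(\xi\cdot\hat u(k))(|\xi|^{-\beta}-|\xi-k|^{-\beta})\hat\eta(\xi-k)$, so Plancherel reduces matters to the pointwise multiplier bound
\[
|\zeta+k|\cdot\bigl||\zeta|^\beta|\zeta+k|^{-\beta}-1\bigr| \leq C|k|, \qquad \zeta\in\Z^d\setminus\{0,-k\}.
\]
A case analysis over $|\zeta|\ll|k|$, $|\zeta|\sim|k|$, $|\zeta|\gg|k|$, using $\bigl||a|^\beta-|b|^\beta\bigr|\lesssim|a-b|^\beta$ for $\beta\in(0,1)$ in the first two regimes and a Taylor expansion of $|\zeta+k|^{-\beta}$ in the last, gives this bound uniformly. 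Summing yields $\|\nabla\cdot[\Lmb^{-\beta},u]\eta\|_{L^2} \leq C\bigl(\sum_k|\hat u(k)||k|\bigr)\|\Lmb^{-\beta}\eta\|_{L^2}$, and Cauchy--Schwarz then gives $\sum_k|\hat u(k)||k|\leq\|u\|_{H^s}\bigl(\sum_k\brk{k}^{2-2s}\bigr)^{1/2}\lesssim\|u\|_{H^s}$ precisely when $s>d/2+1$.

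\textbf{Main obstacle.} The principal difficulty is the pointwise multiplier estimate: the non-smoothness of $|\zeta|^{-\beta}$ near $\zeta=0$ and of $|\zeta+k|^{-\beta}$ near $\zeta=-k$ forces a careful regime-by-regime analysis, and the delicate regime $|\zeta+k|\ll|k|\sim|\zeta|$ produces contributions of order $|k|^\beta|\zeta+k|^{1-\beta}$ that are controlled only because $\beta<1$. Once this uniform multiplier bound is in hand, summation against $\hat u(k)$ and the emergence of the threshold $s>d/2+1$ are routine.
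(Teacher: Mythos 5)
Your proposal is correct and follows essentially the same route as the paper: the same reduction of the time derivative to the term $c_K\int \eta\, u\cdot\nabla\Lmb^{\alpha-d}\eta\,dx$, the same symmetrization by moving one factor $\Lmb^{-\beta}$ across the pairing to extract a transport term controlled by $\|\nabla u\|_{L^\infty}$, and the same Fourier-side commutator estimate (your plane-wave decomposition plus triangle inequality is just Young's convolution inequality $\ell^1\ast\ell^2\to\ell^2$ written out), landing on the identical threshold $s>d/2+1$. The only substantive difference is that your pointwise symbol bound $|\zeta+k|\,\bigl||\zeta|^{\beta}|\zeta+k|^{-\beta}-1\bigr|\le C|k|$ is stated more sharply than the paper's chain of inequalities, but this does not change the argument or its hypotheses.
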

\begin{remark} Compared to the whole space case discussed in Lemma \ref{lem:K-R-d}, we need a better regularity of solutions $u$.
\end{remark}
\begin{proof}[Proof of Lemma \ref{lem_per}]
	Proceeding as in the proof of Lemma \ref{lem:K-R-d}, it suffices to obtain the bound \begin{equation*}
	\begin{split}
	\int_{\T^d} (\rho - \rho^{(\e)})  u \cdot \nabla \Lmb^{\alp-d} (\rho^{(\e)} -  \rho)\,dx   \le C\int_{\T^d} (\rho^{(\e)} -  \rho)\Lmb^{\alp-d} (\rho^{(\e)} -  \rho) \, dx. 
	\end{split}
	\end{equation*} For the simplicity of notation, let us write $g = \rho^{(\e)} -  \rho$ and $b= -(\alp-d)/2$. Then, we compute \begin{equation*}
	\begin{split}
	\int_{\T^d} gu\cdot\nb \Lmb^{-2b}g\,dx & = -\int_{\T^d} [\Lmb^{-b} \nb \cdot (ug) - (u\cdot\nb)\Lmb^{-b}g ]\Lmb^{-b}g \,dx  - \int_{\T^d} (u\cdot\nb)\Lmb^{-b}g \, \Lmb^{-b}g\,dx  \\
	& \le C\nrm{    \Lmb^{-b} \nb \cdot (ug) - (u\cdot\nb)\Lmb^{-b}g }_{L^2}\nrm{ \Lmb^{-b}g}_{L^2} + C\nrm{\nb u}_{L^\infty} \nrm{\Lmb^{-b}g}_{L^2}^2. 
	\end{split}
	\end{equation*} To continue, we consider the Fourier series of $H :=  \Lmb^{-b} \nb \cdot (ug) - (u\cdot\nb)\Lmb^{-b}g $; for $\xi\in\bbZ^d$, \begin{equation*}
	\begin{split}
	\widehat{H}(\xi) & = i\xi|\xi|^{-b} \cdot \sum_{\eta\in\bbZ^d} \hat{g}(\eta)\hat{u}(\xi-\eta) - \sum_{\eta\in\bbZ^d} \hat{u}(\xi-\eta) \cdot i\eta|\eta|^{-b} \hat{g}(\eta) \\
	& = \sum_{\eta\in\bbZ^d} i( \xi|\xi|^{-b} - \eta|\eta|^{-b} ) \cdot \hat{u}(\xi-\eta)\hat{g}(\eta).
	\end{split}
	\end{equation*} Note that $\int_{\T^d}g\,dx = 0$ implies $\hat{g}(0) = 0$. Similarly $\hat{u}(0)=0.$ Hence in the above summation we may assume that $\eta\ne0$ and $\xi-\eta\ne0$. Moreover, when $\xi=0$,\begin{equation}\label{eq:H-zero}
	\begin{split}
	\left|\widehat{H}(0)\right| = \left|\sum_{\eta\in\bbZ^d}\eta|\eta|^{-b} \cdot \hat{u}( -\eta)\hat{g}(\eta) \right| \le C\nrm{\nb u}_{L^2}\nrm{\Lmb^{-b}g}_{L^2}
	\end{split}
	\end{equation}by H\"older's inequality. Now assuming $\xi\ne 0$, we have $|\xi|, |\eta| \gtrsim 1$ and estimate \begin{equation*}
	\begin{split}
	\left|\widehat{H}(\xi)\right| &\le C \sum_{\eta\in\bbZ^d} |\xi-\eta| (|\xi|^{-b}+|\eta|^{-b}) |\hat{u}(\xi-\eta)||\hat{g}(\eta)| \\
	& \le C \sum_{\eta\in\bbZ^d} |\xi-\eta| (|\xi-\eta|^{-b}+1) |\hat{u}(\xi-\eta)| |\eta|^{-b}|\hat{g}(\eta)| .
	\end{split}
	\end{equation*} In the second inequality, we have used that \begin{equation*}
	\begin{split}
	|\xi|^{-b} \le C|\eta|^{-b}|\xi-\eta|^b 
	\end{split}
	\end{equation*} for $\xi, \eta \in \bbZ^d$ with $\xi\ne 0, \eta\ne 0, \eta-\xi\ne 0$. Therefore, with Young's convolution inequality, \begin{equation}\label{eq:H-nonzero}
	\begin{split}
	\nrm{\widehat{H}}_{\ell^2(\xi\ne 0)} \le C \nrm{ |\xi|(|\xi|^{-b}+1)\hat{u}(\xi) }_{\ell^1} \nrm{ |\xi|^{-b}\hat{g}(\xi)}_{\ell^2}     \le C  \nrm{ u }_{H^s} \nrm{ \Lmb^{-b}g}_{L^2},
	\end{split}
	\end{equation} where $s>d/2+1$. Together with \eqref{eq:H-zero} and \eqref{eq:H-nonzero}, we obtain \begin{equation*}
	\begin{split}
	\nrm{H}_{L^2}=\nrm{    \Lmb^{-b} \nb \cdot (ug) - (u\cdot\nb)\Lmb^{-b}g }_{L^2} \le C \nrm{\Lmb^{-b}g}_{L^2}
	\end{split}
	\end{equation*} with some $C = C(\nrm{u}_{H^s})>0$. This completes the proof. 
\end{proof}

\subsection{Proof of Proposition \ref{prop_deri}} 
In this subsection, we provide the details of the proof of Proposition \ref{prop_deri}. Let us first rewrite the equation \eqref{eq:E1} as 
$$
\left\{
\begin{aligned}
&\rd_t \rho + \nb \cdot (\rho u) = 0, \\
&\rd_t (\rho  u) + \nabla \cdot (  \rho  u\otimes   u)  =  - \frac1\e   \rho  u - \frac1\e  \rho \nabla \frac{\delta \mf ( \rho)}{\delta  \rho} +  \rho  e,
\end{aligned}
\right.
$$
where $e$ is given by $ e = \pa_t  u + (u \cdot \nabla)  u$. Then straightforward computations yield
\begin{align*}
\begin{aligned}
&\frac12\frac{d}{dt}\int_\Omg \rho^{(\e)} |u^{(\e)} -  u|^2\,dx + \frac1\e \int_\Omg \rho^{(\e)} | u^{(\e)} -  u|^2\,dx \cr
&\quad = - \int_\Omg \rho^{(\e)} ( u^{(\e)} -  u) \otimes ( u^{(\e)} -  u) : \nabla  u\,dx - \frac1\e\int_\Omg \rho^{(\e)} ( u^{(\e)} -  u) \cdot  \nabla \lt( \frac{\delta \mf (\rho^{(\e)})}{\delta \rho^{(\e)}}  -  \frac{\delta \mf ( \rho)}{\delta  \rho}\rt)dx \cr
&\qquad - \int_\Omg \rho^{(\e)} ( u^{(\e)} - u) \cdot e\,dx.
\end{aligned}
\end{align*}
Here, the first term on the right hand side can be easily bounded from above by
\[
C\|\nabla u\|_{L^\infty} \int_\Omg \rho^{(\e)} |u^{(\e)} - u|^2\,dx.
\]
For the second term, we write
\begin{align*}
&-\int_\Omg \rho^{(\e)} ( u^{(\e)} -  u) \cdot  \nabla \lt( \frac{\delta \mf (\rho^{(\e)})}{\delta \rho^{(\e)}}  -  \frac{\delta \mf ( \rho)}{\delta  \rho}\rt)dx\cr
&\quad = - c_P\int_\Omg \rho^{(\e)} ( u^{(\e)} -  u) \cdot \nabla (\U'(\rho^{(\e)}) - \U'( \rho) )\,dx + c_K \int_\Omg \rho^{(\e)} ( u^{(\e)} -  u) \cdot \nabla \Lmb^{\alpha-d} (\rho^{(\e)} -  \rho)\,dx\cr
&\quad =: I +  J.
\end{align*}
On the other hand, it follows from Lemmas \ref{lem_ut} and \ref{lem_it} that 
\begin{align*}
I &= -c_P\frac{d}{dt} \int_\Omg \U(\rho^{(\e)} |  \rho)\,dx + c_P(\gamma-1) \int_\Omg \U(\rho^{(\e)} | \rho) \nabla \cdot   u\,dx\cr
& \leq -c_P\frac{d}{dt} \int_\Omg \U(\rho^{(\e)} |  \rho)\,dx + c_P(\gamma-1)\|\nabla \cdot  u\|_{L^\infty} \int_\Omg \U(\rho^{(\e)} | \rho) \,dx
\end{align*}
and
\[
J \leq \frac{c_K}2\frac{d}{dt} \int_\Omg (\rho^{(\e)} -  \rho)\Lmb^{\alpha - d}(\rho^{(\e)} -  \rho)\,dx + C \int_\Omg (\rho^{(\e)} -  \rho)\Lmb^{\alpha - d}(\rho^{(\e)} -  \rho)\,dx,
\]
where $C>0$ depends only on $\|\nabla  \cdot  u\|_{L^\infty}$ and $|c_K|$. Here we used the fact that $\U(\rho^{(\e)} | \rho)  \geq 0$. 

Finally, the third term can be estimated as 
\bq\label{est_err}
\int_\Omg \rho^{(\e)} (u^{(\e)} -  u) \cdot  e\,dx \leq  \|  e\|_{L^\infty} \int_\Omg \rho^{(\e)} | u^{(\e)} -  u|\,dx  \leq \frac{1}{4\e} \int_\Omg \rho^{(\e)} |u^{(\e)} -  u|^2\,dx + C\e,
\eq
where $C>0$ depends only on $\| e\|_{L^\infty}$ and $\|\rho^{(\e)}\|_{L^\infty(0,T;L^1)}$. Here we used the assumption $u \in W^{1,\infty}((0,T) \times \Omg)$, which implies that $e \in L^\infty((0,T) \times \Omg)$.

We now combine all of the above estimates to have
\begin{align*}
&\frac{d}{dt}\lt(\frac12\int_\Omg \rho^{(\e)} |u^{(\e)} -  u|^2\,dx + \frac{c_P}\e\int_\Omg \U(\rho^{(\e)}| \rho)\,dx - \frac{c_K}{2\e} \int_\Omg (\rho^{(\e)} -  \rho)\Lmb^{\alpha - d}(\rho^{(\e)} -  \rho)\,dx \rt)\cr
&\quad  + \frac1{2\e} \int_\Omg \rho^{(\e)} | u^{(\e)} -  u|^2\,dx\cr
&\qquad \leq \frac{c_P C}{\e} (\gamma-1) \int_\Omg \U(\rho^{(\e)} | \rho) \,dx + \frac{C}{\e}\int_\Omg (\rho^{(\e)} -  \rho)\Lmb^{\alpha - d}(\rho^{(\e)} -  \rho)\,dx  + C\e,
\end{align*}
where $C>0$ depends only on $\|u\|_{W^{1,\infty}}$. 
This completes the proof.


\begin{remark}\label{rmk_e} If we only assume $\pa_t u, \nabla u \in L^\infty((0,T) \times \Omg)$, then we modify the estimate \eqref{est_err} as
\begin{align*}
&\int_\Omg \rho^{(\e)} (u^{(\e)} -  u) \cdot (\pa_t  u + ( u - u^{(\e)} + u^{(\e)})\cdot \nabla  u)\,dx\cr
&\quad \leq \|\pa_t  u\|_{L^\infty} \int_\Omg \rho^{(\e)} | u^{(\e)} -  u|\,dx + \|\nabla  u\|_{L^\infty} \int_\Omg \rho^{(\e)} |u^{(\e)} -  u|^2\,dx\cr
&\qquad + \|\nabla  u\|_{L^\infty} \int_\Omg \rho^{(\e)} |u^{(\e)} -  u| |u^{(\e)}|\,dx\cr
&\quad \leq \frac{1}{4\e} \int_\Omg \rho^{(\e)} |u^{(\e)} -  u|^2\,dx +  C\e \int_\Omg \rho^{(\e)} |u^{(\e)}|^2\,dx  + C\e
\end{align*}
for $\e > 0$ small enough, where $C>0$ depends on $\|\rho^{(\e)}\|_{L^\infty(0,T;L^1)}$, $\|\pa_t  u\|_{L^\infty}$, and $\|\nabla   u\|_{L^\infty}$. This yields
\begin{align}\label{est_me}
\begin{aligned}
&\frac{d}{dt}\lt(\frac12\int_\Omg \rho^{(\e)} |u^{(\e)} -  u|^2\,dx + \frac{c_P}\e\int_\Omg \U(\rho^{(\e)}| \rho)\,dx - \frac{c_K}{2\e} \int_\Omg (\rho^{(\e)} -  \rho)\Lmb^{\alpha - d}(\rho^{(\e)} -  \rho)\,dx \rt)\cr
&\quad  + \frac1{2\e} \int_\Omg \rho^{(\e)} | u^{(\e)} -  u|^2\,dx\cr
&\qquad \leq \frac{c_P C}{\e} (\gamma-1) \int_\Omg \U(\rho^{(\e)} | \rho) \,dx + \frac{C}{\e}\int_\Omg (\rho^{(\e)} -  \rho)\Lmb^{\alpha - d}(\rho^{(\e)} -  \rho)\,dx +  C\e \int_\Omg \rho^{(\e)} |u^{(\e)}|^2\,dx  + C\e,
\end{aligned}
\end{align}
where $C>0$ depends only on $\|\pa_t   u\|_{L^\infty}$ and $\|\nabla  u\|_{L^\infty}$. Thus in this case, the kinetic energy $\int_\Omg \rho^{(\e)} |u^{(\e)}|^2\,dx$ appears in Proposition \ref{prop_deri}. On the other hand, it can be controlled by the total energy estimate in Lemma \ref{lem_energy} that 
\[
\int_0^t \int_\Omg \rho^{(\e)} |u^{(\e)}|^2\,dx d\tau \leq \e\lt(\int_\Omg \rho_0^{(\e)} |u_0^{(\e)}|^2\,dx + \frac1\e \int_\Omg \rho_0^{(\e)} \Lmb^{\alpha-d}\rho_0^{(\e)}\,dx\rt).
\]
We then combine this with \eqref{est_me} to conclude
\begin{align*}
&\frac{d}{dt}\lt(\frac12\int_\Omg \rho^{(\e)} |u^{(\e)} -  u|^2\,dx + \frac{c_P}\e\int_\Omg \U(\rho^{(\e)}| \rho)\,dx - \frac{c_K}{2\e} \int_\Omg (\rho^{(\e)} -  \rho)\Lmb^{\alpha - d}(\rho^{(\e)} -  \rho)\,dx \rt)\cr
&\quad  + \frac1{2\e} \int_\Omg \rho^{(\e)} | u^{(\e)} -  u|^2\,dx\cr
&\qquad \leq \frac{c_P C}{\e} (\gamma-1) \int_\Omg \U(\rho^{(\e)} | \rho) \,dx + \frac{C}{\e}\int_\Omg (\rho^{(\e)} -  \rho)\Lmb^{\alpha - d}(\rho^{(\e)} -  \rho)\,dx\cr
&\qquad \quad +  C\e^2\lt(\int_\Omg \rho_0^{(\e)} |u_0^{(\e)}|^2\,dx + \frac1\e \int_\Omg \rho_0^{(\e)} \Lmb^{\alpha-d}\rho_0^{(\e)}\,dx\rt)  + C\e,
\end{align*}
where $C>0$ is independent of $\e$.
\end{remark}

%
%
%
%
%
%

\section{Proof of Theorem \ref{thm_deri}}\label{sec:proof}

\subsection{Pressureless and repulsive case}

In this part, we provide the details of the proof for Theorem \ref{thm_deri} when $c_P = 0$ and $c_K <0$. For simplicity, without loss of generality, we set $c_K = -1$. In this case, it follows from \eqref{prop_deri} that 
\begin{align*}
&\frac{d}{dt}\lt(\frac12\int_\Omg \rho^{(\e)} |u^{(\e)} -  u|^2\,dx + \frac{1}{2\e} \int_\Omg (\rho^{(\e)} -  \rho)\Lmb^{\alpha - d}(\rho^{(\e)} -  \rho)\,dx \rt) + \frac1{2\e} \int_\Omg \rho^{(\e)} | u^{(\e)} -  u|^2\,dx\cr
&\quad \leq \frac{C}{\e}\int_\Omg (\rho^{(\e)} -  \rho)\Lmb^{\alpha - d}(\rho^{(\e)} -  \rho)\,dx   + C\e,
\end{align*}
where $C>0$ is independent of $\e> 0$. Now we integrate it over $[0,t]$ and use Lemma \ref{lem_energy} to have
\begin{align}\label{est_nop}
\begin{aligned}
&\frac12\int_\Omg \rho^{(\e)} |u^{(\e)} -  u|^2\,dx + \frac{1}{2\e} \int_\Omg (\rho^{(\e)} -  \rho)\Lmb^{\alpha - d}(\rho^{(\e)} -  \rho)\,dx + \frac1{2\e}\int_0^t \int_\Omg \rho^{(\e)} | u^{(\e)} -  u|^2\,dxd\tau\cr
&\quad \leq \frac12\int_\Omg \rho_0^{(\e)} |u_0^{(\e)} -  u_0|^2\,dx + \frac{1}{2\e} \int_\Omg (\rho_0^{(\e)} -  \rho_0)\Lmb^{\alpha - d}(\rho_0^{(\e)} - \rho_0)\,dx \cr
&\qquad + \frac{C}{\e}\int_0^t\int_\Omg (\rho^{(\e)} -  \rho)\Lmb^{\alpha - d}(\rho^{(\e)} -  \rho)\,dxd\tau    + C\e.
\end{aligned}
\end{align}
In particular, this implies
\begin{align*}
\int_\Omg (\rho^{(\e)} -  \rho)\Lmb^{\alpha - d}(\rho^{(\e)} -  \rho)\,dx 
&\leq \e\int_\Omg \rho_0^{(\e)} |u_0^{(\e)} -  u_0|^2\,dx +   \int_\Omg (\rho_0^{(\e)} -  \rho_0)\Lmb^{\alpha - d}(\rho_0^{(\e)} -  \rho_0)\,dx  \cr
&\quad + C\int_0^t\int_\Omg (\rho^{(\e)} -  \rho)\Lmb^{\alpha - d}(\rho^{(\e)} -  \rho)\,dxd\tau   + C\e^2,
\end{align*}
and subsequently, applying Gr\"onwall's lemma to the above, we get
\begin{align*}
\int_\Omg (\rho^{(\e)} -  \rho)\Lmb^{\alpha - d}(\rho^{(\e)} -  \rho)\,dx &\leq \e\int_\Omg \rho_0^{(\e)} |u_0^{(\e)} -  u_0|^2\,dx +   \int_\Omg (\rho_0^{(\e)} -  \rho_0)\Lmb^{\alpha - d}(\rho_0^{(\e)} -  \rho_0)\,dx + C\e^2.
\end{align*}
We then combine this with \eqref{est_nop} to yield
\begin{align}\label{est_nop2}
\begin{aligned}
&\int_\Omg \rho^{(\e)} |u^{(\e)} -  u|^2\,dx + \frac{1}{\e} \int_\Omg (\rho^{(\e)} -  \rho)\Lmb^{\alpha - d}(\rho^{(\e)} -  \rho)\,dx + \frac1{\e}\int_0^t \int_\Omg \rho^{(\e)} | u^{(\e)} -  u|^2\,dxd\tau\cr
&\quad \leq C\int_\Omg \rho_0^{(\e)} |u_0^{(\e)} -  u_0|^2\,dx + \frac{C}{\e} \int_\Omg (\rho_0^{(\e)} -\rho_0)\Lmb^{\alpha - d}(\rho_0^{(\e)} -  \rho_0)\,dx + C\e,
\end{aligned}
\end{align}
where $C>0$ is independent of $\e> 0$. On the other hand, due to the symmetry of the operator $\Lmb^{\frac{\alpha-d}{2}}$, we find
\[
\int_\Omg (\rho^{(\e)} -  \rho) \Lmb^{\alpha-d}(\rho^{(\e)} -  \rho)\,dx = \int_\Omg |\Lmb^{\frac{\alpha-d}{2}}(\rho^{(\e)} -  \rho)|^2\,dx = \|(\rho^{(\e)} -  \rho)(\tau,\cdot)\|_{\dot{H}^{-\frac{d-\alpha}{2}}}^2 .
\] 

For the quantitative error estimate between densities, we show that  the $2$-Wasserstein distance between $\rho$ and $\bar\rho$ can be controlled by the modulated kinetic energy. For this, we first recall from {\cite{AGS08},\cite[Theorem 23.9]{Vi09}} the following result on the time derivative of $2$-Wasserstein distance. 
\begin{proposition}\label{prop_wt}Let $T>0$ and $\mu, \nu \in \mc([0,T); L^1_2(\Omg))$ be solutions of the following continuity equations:
\[
\pa_t \mu + \nabla \cdot (\mu \xi) = 0 \quad \mbox{and} \quad \pa_t \nu + \nabla \cdot (\nu \eta) = 0 \quad \mbox{in the sense of distributions}
\]
for locally Lipschitz vector fields $\xi$ and $\eta$ satisfying
\[
\int_0^T\lt(\int_\Omg  |\xi|^2 \,\mu(dx) + \int_\Omg  |\eta|^2 \,\nu (dx) \rt) dt < \infty.
\]
Then for almost any $t \in (0,T)$
\begin{align*}
\frac12\frac{d}{dt} \d_2^2(\mu, \nu) &= \iint_{\Omg \times \Omg}   \lal x - y, \xi(x) - \eta(y)\ral\,\pi(dx,dy) \cr
&= \int_\Omg  \lal x - \nabla_x \varphi^*(x), \xi(x) \ral \,\mu(dx) + \int_\Omg  \lal y - \nabla_y \varphi(y), \eta(y)\ral \,\nu(dy),
\end{align*}
where $\pi \in \Pi_{o}(\mu, \nu)$, $\nabla \varphi \# \nu = \mu$, and $\nabla \varphi^* \# \mu = \nu$. Here, {$\Pi_{o}(\mu, \nu)$ stands for the set of optimal couplings between $\mu$ and $\nu$}, $\mu = \nabla \varphi \# \nu$ denotes the push-forward of $\nu$ by $\nabla \varphi$, i.e. $\mu(B) = \nu(\nabla \varphi^*(B))$ for $B \subset \Omg$, and $\varphi^*$ is the Legendre transform of $\varphi$.
\end{proposition}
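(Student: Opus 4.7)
The plan is to combine Brenier's polar factorization theorem with a first-variation argument at an optimal coupling, which is the standard route in \cite{AGS08, Vi09}. For a.e.\ $t\in(0,T)$, Brenier's theorem provides Legendre-dual convex potentials $\varphi_t, \varphi_t^*$ realizing the optimal transport: $\nabla\varphi_t \# \nu_t = \mu_t$, $\nabla\varphi_t^*\#\mu_t = \nu_t$, and the (essentially unique) optimal coupling admits the two equivalent graph representations
\begin{equation*}
\pi_t = (\mathrm{id}\times \nabla\varphi_t^*)\#\mu_t = (\nabla\varphi_t \times \mathrm{id})\#\nu_t,
\end{equation*}
so that $y=\nabla\varphi_t^*(x)$ and $x=\nabla\varphi_t(y)$ hold $\pi_t$-a.e.\ on $\supp\pi_t$.

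First I would establish the identity $\frac{1}{2}\frac{d}{dt}\d_2^2(\mu_t,\nu_t)=\iint\lal x-y,\xi(x)-\eta(y)\ral\,\pi_t(dx,dy)$. To this end, I would fix $t$, take the optimal $\pi_t$, and construct for small $h$ an admissible coupling between $\mu_{t+h}$ and $\nu_{t+h}$ by pushing $\pi_t$ forward under the flow maps $\Phi^{\xi}_{t,t+h}$ and $\Phi^{\eta}_{t,t+h}$ generated by $\xi$ and $\eta$, respectively. Testing against $|x-y|^2$ yields the upper bound
\begin{equation*}
\tfrac12\d_2^2(\mu_{t+h},\nu_{t+h}) \leq \tfrac12\iint \abs{\Phi^{\xi}_{t,t+h}(x) - \Phi^{\eta}_{t,t+h}(y)}^2\, \pi_t(dx,dy),
\end{equation*}
and a first-order Taylor expansion in $h$ gives the derivative from above; the matching lower bound is obtained by running the same construction backward starting from $\pi_{t+h}$. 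The square-integrability hypothesis on $\xi,\eta$ against $\mu,\nu$ is exactly what is needed to guarantee absolute continuity of $t\mapsto \d_2^2(\mu_t,\nu_t)$ and to control the remainder in the Taylor expansion.

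The second identity is then a direct change of variables using the graph structure of $\pi_t$. Splitting $\lal x-y,\xi(x)-\eta(y)\ral = \lal x-y,\xi(x)\ral - \lal x-y,\eta(y)\ral$, integrating the first piece against $\pi_t = (\mathrm{id}\times\nabla\varphi_t^*)\#\mu_t$ produces $\int \lal x-\nabla\varphi_t^*(x),\xi(x)\ral\,\mu_t(dx)$, while integrating the second against the alternative representation $\pi_t=(\nabla\varphi_t\times\mathrm{id})\#\nu_t$ produces, after a sign flip, $\int \lal y-\nabla\varphi_t(y),\eta(y)\ral\,\nu_t(dy)$. Adding these yields the claimed formula.

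The main technical obstacle lies in the rigorous justification of the first-variation step: the absolute continuity of $t\mapsto \d_2^2(\mu_t,\nu_t)$, a measurable selection of optimal couplings $\pi_t$, and the validity of the Taylor expansion along the merely locally Lipschitz flows of $\xi$ and $\eta$. These points are treated in full generality in \cite[Ch.~8]{AGS08} and \cite[Thm.~23.9]{Vi09}; since the proposition is used here only as an ingredient for the subsequent Wasserstein-versus-modulated-kinetic-energy comparison, it suffices to quote the result from these references rather than to redo the analysis.
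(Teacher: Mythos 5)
Your proposal is correct and matches the paper's treatment: the paper does not prove this proposition but simply recalls it from \cite{AGS08} and \cite[Theorem 23.9]{Vi09}, which are exactly the references you defer to, and your sketch (Brenier's theorem plus the first-variation argument along the flows of $\xi$ and $\eta$) is the standard argument underlying those results. No gap to report.
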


\begin{lemma}\label{lem_d2} Let $T>0$. Let $(\rho^{(\e)}, u^{(\e)})$ and $  \rho$ be sufficiently regular solutions to the systems \eqref{eq:ER} and \eqref{eq:E1} on the time interval $[0,T]$, respectively. Then we have
\bq\label{est_d2}
\d_2^2(\rho^{(\e)}(t),  \rho(t)) \leq C\exp\lt(C\|\nabla  u\|_{L^\infty}\rt)\lt(\d_2^2(\rho_0^{(\e)}, \rho_0) + \int_0^t \int_\Omg \rho^{(\e)}|u^{(\e)} - u|^2\,dx d\tau\rt)
\eq
for $0 \leq t \leq T$, where $C > 0$ depends only on $T$.
\end{lemma}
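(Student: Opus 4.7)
The plan is to differentiate $\d_2^2(\rho^{(\e)}(t), \rho(t))$ in time via Proposition \ref{prop_wt}, split the resulting integrand into a ``velocity-defect'' piece and a ``Lipschitz'' piece, and close the estimate by Gr\"onwall's lemma.

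First, I will apply Proposition \ref{prop_wt} with $\mu = \rho^{(\e)}$, $\nu = \rho$, $\xi = u^{(\e)}$, and $\eta = u$. The required integrability of $\int \rho^{(\e)}|u^{(\e)}|^2\,dx$ and $\int \rho|u|^2\,dx$ over $[0,T]$ follows from ``sufficiently regular solutions,'' the hypothesis $u \in W^{1,\infty}$ of Remark \ref{rmk_e}'s ambient setting, and the finite-second-moment assumption \eqref{as1}. Proposition \ref{prop_wt} then yields, for an optimal coupling $\pi \in \Pi_o(\rho^{(\e)}, \rho)$,
\begin{equation*}
\frac{1}{2}\frac{d}{dt}\d_2^2(\rho^{(\e)}, \rho) = \iint_{\Omg \times \Omg} \langle x-y, u^{(\e)}(x) - u(y)\rangle\,\pi(dx,dy).
\end{equation*}

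Next, I will insert and subtract $u(x)$ and split the right-hand side as
\begin{equation*}
\iint \langle x-y, u(x) - u(y)\rangle\,\pi(dx,dy) + \iint \langle x-y, u^{(\e)}(x) - u(x)\rangle\,\pi(dx,dy) =: A_1 + A_2.
\end{equation*}
For $A_1$, the Lipschitz bound on $u$ gives $A_1 \le \|\nabla u\|_{L^\infty}\,\d_2^2(\rho^{(\e)}, \rho)$. For $A_2$, I apply Cauchy--Schwarz together with the fact that $\pi$ has first marginal $\rho^{(\e)}$:
\begin{equation*}
A_2 \le \d_2(\rho^{(\e)}, \rho)\left(\int_\Omg \rho^{(\e)}|u^{(\e)} - u|^2\,dx\right)^{1/2}.
\end{equation*}

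Combining the two bounds and using Young's inequality $2ab \le a^2 + b^2$ yields
\begin{equation*}
\frac{d}{dt}\d_2^2(\rho^{(\e)}, \rho) \le (2\|\nabla u\|_{L^\infty}+1)\,\d_2^2(\rho^{(\e)}, \rho) + \int_\Omg \rho^{(\e)}|u^{(\e)} - u|^2\,dx.
\end{equation*}
Gr\"onwall's lemma then immediately gives \eqref{est_d2} with $C = C(T)$ and the exponential factor $\exp(C\|\nabla u\|_{L^\infty})$.

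The only delicate point I expect is invoking Proposition \ref{prop_wt}: one must verify that both $\rho^{(\e)}$ and $\rho$ lie in $\mc([0,T); L^1_2(\Omg))$ with locally Lipschitz velocity fields satisfying the kinetic-energy integrability hypothesis. The Lipschitz regularity and integrability of $u$ are given by \eqref{as1}, while the corresponding properties for $u^{(\e)}$ come from the ``sufficiently regular'' hypothesis on solutions of \eqref{eq:ER}; the uniform-in-time second-moment bound for $\rho^{(\e)}$ is recorded in Remark (ii) following Theorem \ref{thm_deri}. The subsequent estimates are routine.
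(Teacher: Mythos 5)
Your proposal is correct and follows essentially the same route as the paper: differentiate $\d_2^2$ via Proposition \ref{prop_wt} with the optimal coupling, separate the Lipschitz contribution of $u$ from the velocity defect $u^{(\e)}-u$ (the paper applies Cauchy--Schwarz first and then splits inside the square, while you split first and then apply Cauchy--Schwarz to one piece, which is an immaterial reordering), and close with Gr\"onwall. Your version of the final differential inequality, stated for $\d_2^2$ rather than for $\d_2$, is if anything slightly cleaner since it avoids dividing by a possibly vanishing distance.
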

\begin{proof} By Proposition \ref{prop_wt}, we find
\begin{align}\label{ineq_w2}
\begin{aligned}
\frac12\frac{d}{dt} \d_2^2(\rho^{(\e)},  \rho) &= \iint_{\Omg \times \Omg}  \lal x - y, u^{(\e)}(x) - u(y)\ral\,\pi(dx,dy) \cr
&\leq \d_2(\rho^\e, \rho)\lt(\iint_{\Omg \times \Omg}   |u^{(\e)}(x) - u(y)|^2 \,\pi(dx,dy) \rt)^{1/2}.
\end{aligned}
\end{align}
On the other hand, we get
\[
\iint_{\Omg \times \Omg}  |u^{(\e)}(x) -  u(y)|^2 \,\pi(dx,dy) \leq 2\int_\Omg |u^{(\e)}(x) -  u(x)|^2 \rho^{(\e)}(x)\,dx + 2\|\nabla  u\|_{L^\infty}^2 \d_2^2(\rho^{(\e)},   \rho),
\]
{where we used the fact $\pi$ is the optimal coupling between $\rho^{(\e)}$ and $\rho$.}
This together with \eqref{ineq_w2} yields
\[
\frac{d}{dt}\d_2(\rho^{(\e)},  \rho) \leq C \lt( \int_\Omg |u^{(\e)}(x) -  u(x)|^2 \rho^{(\e)}(x)\,dx\rt)^{1/2} + C\|\nabla  u\|_{L^\infty}\d_2(\rho^{(\e)}, \rho).
\]
Applying the Gr\"onwall's lemma to the above asserts
\[
\d_2^2(\rho^{(\e)}(t),  \rho(t)) \leq C\exp\lt(C\|\nabla  u\|_{L^\infty}\rt)\lt(\d_2^2(\rho_0^{(\e)},  \rho_0) + \int_0^t \int_\Omg \rho^{(\e)}|u^{(\e)} -   u|^2\,dx d\tau\rt),
\]
where $C > 0$ depends only on $T$.
\end{proof} 

\begin{remark}Lemma \ref{lem_d2} requires rather strong regularities of solutions to the systems \eqref{eq:ER} and \eqref{eq:E1}. To be more specific, as stated in Proposition \ref{prop_wt}, the corresponding velocity fields $u$ and $\bar u$ should be locally Lipschitz. However, this assumption can be relaxed by employing a probabilistic representation formula for continuity equations, see \cite{CCpre, CC20, Cpre, FK19} for detailed discussion. 
\end{remark}

For the quantitative bound on the second term on the left hand side of \eqref{res2}, we obtain that for any $\phi \in (L^\infty \cap Lip)(\Omg)$, 
\begin{align*}
&\int_\Omg \phi (\rho^{(\e)} u^{(\e)} -  \rho u)\,dx \cr
&\quad = \int_\Omg \phi  (\rho^{(\e)} - \rho)  u\,dx + \int_\Omg \phi \rho^{(\e)} (u^{(\e)} -  u)\,dx\cr
&\quad \leq \|\phi u\|_{L^\infty \cap Lip}\, \d_{BL}(\rho^{(\e)}, \rho) + \|\phi\|_{L^\infty} + \|\phi\|_{L^\infty} \|\rho^{(\e)}\|_{L^1}^{1/2}\lt(\int_\Omg \rho^{(\e)} |u^{(\e)} -  u|^2\,dx \rt)^{1/2}\cr
&\quad \leq C\d_2(\rho^{(\e)}, \rho) + C\lt(\int_\Omg \rho^{(\e)} |u^{(\e)} -  u|^2\,dx \rt)^{1/2}
\end{align*}
due to $\d_{BL}(\rho^{(\e)}, \rho) \leq \d_2(\rho^{(\e)}, \rho)$. This together with Lemma \ref{lem_d2} implies
\begin{align*}
\d_{BL}^2(\rho^{(\e)} u^{(\e)},  \rho   u) &\leq C\d_2^2(\rho^{(\e)}, \rho) + C\int_\Omg \rho^{(\e)} |u^{(\e)} -  u|^2\,dx \cr
&\leq C\d_2^2(\rho_0^{(\e)},  \rho_0) + C\int_0^T \int_\Omg \rho^{(\e)}|u^{(\e)} -  u|^2\,dx d\tau + C\int_\Omg \rho^{(\e)} |u^{(\e)} -   u|^2\,dx,
\end{align*}
and subsequently,
\bq\label{est_dBL}
\int_0^T \d_{BL}^2((\rho^{(\e)} u^{(\e)})(t), ( \rho  u)(t))\,dt \leq C\d_2^2(\rho_0^{(\e)},  \rho_0) + C\int_0^T \int_\Omg \rho^{(\e)}|u^{(\e)} -   u|^2\,dx dt,
\eq
where $C>0$ is independent of $\e>0$.

We finally combine \eqref{est_d2}, \eqref{est_dBL}, \eqref{est_nop2}, and Remark \ref{rmk_e} to assert
\begin{align*}
&\sup_{0 \leq t \leq T}\lt(\d_2^2(\rho^{(\e)}(t), \rho(t)) +\|(\rho^{(\e)} -  \rho)(t,\cdot)\|_{\dot{H}^{-\frac{d-\alpha}{2}}}^2  \rt) + \int_0^T \d_{BL}^2((\rho^{(\e)} u^{(\e)})(t), ( \rho  u)(t))\,dt \cr
&\quad \leq C\e \int_\Omg \rho_0^{(\e)} |u_0^{(\e)} -  u_0|^2\,dx + C\d_2^2(\rho_0^{(\e)},  \rho_0) + C \int_\Omg (\rho_0^{(\e)} -  \rho_0)\Lmb^{\alpha - d}(\rho_0^{(\e)} - \rho_0)\,dx  + C\e^2,
\end{align*}
where $C > 0$ is independent of $\e> 0$. This completes the proof.

%
%
%
%
%
%

\subsection{Pressure and attractive case}

We first recall Hardy--Littlewood--Sobolev inequality.
\begin{lemma}[\cite{Li83}]\label{lem_HLS} For all $f \in L^p(\Omg), g \in L^q(\Omg)$, $1 < p,q < \infty$, $d-2 < \alpha < d$ and $\frac1p + \frac1q + \frac\alpha d = 2$, it holds
\[
\lt|\int_\Omg f \Lmb^{\alpha-d}g\,dx \rt| \leq C \|f\|_{L^p}\|g\|_{L^q},
\]
where $C = C(\alpha,d,p,q) > 0$.
\end{lemma}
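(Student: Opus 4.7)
The inequality is the classical Hardy--Littlewood--Sobolev bound (with the sharp constant being Lieb's contribution in \cite{Li83}); only boundedness is needed here, so the plan is essentially to record the standard proof via Riesz potentials, a weak-type endpoint, and Marcinkiewicz interpolation.

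First I would rewrite the left-hand side as a pairing against a Riesz potential: on $\R^d$ one has $\Lmb^{\alp-d}g = c_{\alp,d}(|\cdot|^{-\alp}\ast g)$ with $c_{\alp,d}>0$, and since $d-2<\alp<d$ the kernel is locally integrable away from the origin. By H\"older, it suffices to prove that the Riesz potential operator $T_\alp g := |\cdot|^{-\alp}\ast g$ maps $L^q(\R^d)$ into $L^{p'}(\R^d)$, where $1/p+1/p'=1$. The hypothesis $1/p+1/q+\alp/d=2$ rearranges exactly to the Sobolev scaling relation $1/p'=1/q-(d-\alp)/d$, as it must.

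For the boundedness of $T_\alp$ I would run the standard kernel--splitting argument. Write $|y|^{-\alp}= |y|^{-\alp}\mathbf{1}_{|y|\le R}+ |y|^{-\alp}\mathbf{1}_{|y|>R}$; the near field is bounded pointwise by $C R^{d-\alp}Mg(x)$, where $M$ is the Hardy--Littlewood maximal operator, while the far field is bounded via H\"older by $C R^{d-\alp-d/q}\nrm{g}_{L^q}$. Optimizing $R$ to balance the two terms yields $|T_\alp g(x)|\le C(Mg(x))^{q/p'}\nrm{g}_{L^q}^{1-q/p'}$; combined with Chebyshev and the $L^q$-boundedness of $M$ for $q>1$, this gives the weak-type $(q,p')$ estimate. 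Marcinkiewicz interpolation between two such weak endpoints promotes this to the strong $L^q \to L^{p'}$ bound, and dualizing against $f\in L^p$ delivers the stated inequality.

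The only substantive modification is for $\Omg=\T^d$, where $\Lmb^{\alp-d}$ is a Fourier multiplier on $\Z^d$ rather than a single Riesz convolution, and the associated kernel is a periodic summation $k_{\mathrm{per}}(x)\sim \sum_{n\in\Z^d}|x+n|^{-\alp}$. I would split $k_{\mathrm{per}}$ into a singular part supported in a small neighborhood of the origin (coinciding, up to a smooth factor, with the Euclidean Riesz kernel) and a bounded smooth remainder: the singular part is handled verbatim as on $\R^d$, while the remainder is handled by Young's convolution inequality on $\T^d$. This periodic reduction is the only technical wrinkle relative to the whole-space case, and is the step I would expect to be the main obstacle if one insisted on proving the result from scratch; however, since Lieb's formulation in \cite{Li83} already covers what we need, it suffices to cite it.
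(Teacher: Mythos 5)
The paper offers no proof of this lemma at all --- it is quoted verbatim from Lieb \cite{Li83} --- so any argument you supply is by definition a different route; what you have written is the standard Hedberg-type proof and it is correct. Two small remarks. First, the final steps are slightly redundant: once you have the pointwise bound $|T_\alpha g(x)|\le C\,(Mg(x))^{q/p'}\,\|g\|_{L^q}^{1-q/p'}$, raising it to the power $p'$ and integrating, together with the $L^q$-boundedness of the maximal function for $q>1$, already gives the \emph{strong} $L^q\to L^{p'}$ bound; the passage through Chebyshev, weak-type estimates, and Marcinkiewicz interpolation is only needed at the excluded endpoint $q=1$. Second, on $\mathbb{T}^d$ the lattice sum $\sum_{n\in\mathbb{Z}^d}|x+n|^{-\alpha}$ is actually divergent for $\alpha<d$, so the periodic kernel cannot be written that way literally; it must be defined through the Fourier multiplier $|\xi|^{\alpha-d}$ restricted to $\xi\ne 0$ (equivalently, by a renormalized/Ewald-type summation, acting on mean-zero data --- which is how the paper uses it, always applied to $\rho^{(\e)}-\rho$). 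With that definition the kernel does equal the Euclidean Riesz kernel near the origin plus a smooth bounded remainder, and your splitting into a singular local piece plus a Young-inequality remainder goes through. Neither point is a genuine gap, and since the constant's explicit value is irrelevant here, citing \cite{Li83} as the paper does is equally legitimate.
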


%

\begin{lemma}\label{lem_U2}Let $\gamma\geq 1$. Suppose that $\rho^\gamma, \bar\rho^\gamma \in L^1(\Omg)$. Then we have
\[
\|\rho - \bar\rho\|_{L^\gamma}^2 \leq C\int_\Omg \U(\rho| \bar \rho)\,dx
\]
for some $C>0$ which depends only on $\|\rho\|_{L^\gamma}$, $\|\bar\rho\|_{L^\gamma}$, and $\gamma$.
\end{lemma}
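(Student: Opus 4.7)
The plan is to combine the pointwise lower bound on the modulated internal energy from Lemma \ref{lem_U}, namely
\[
\U(\rho|\bar\rho) \geq \frac{\gamma}{2}\min\{\rho^{\gamma-2}, \bar\rho^{\gamma-2}\}|\rho - \bar\rho|^2,
\]
with a Hölder-type interpolation argument involving the $L^\gamma$ norms of $\rho$ and $\bar\rho$. The cases $\gamma = 2$ (trivial since $\U(\rho|\bar\rho) = (\rho-\bar\rho)^2$) and $\gamma = 1$ (reducing to Cauchy--Schwarz) can be treated as endpoints of a unified argument for $1 \le \gamma \le 2$.

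First I would focus on the range $1 \leq \gamma \leq 2$, where $t \mapsto t^{\gamma-2}$ is non-increasing, so that $\min\{\rho^{\gamma-2}, \bar\rho^{\gamma-2}\} = (\max\{\rho, \bar\rho\})^{\gamma-2}$. Setting $M := \max\{\rho, \bar\rho\}$, decompose
\[
|\rho - \bar\rho|^\gamma = \left[|\rho - \bar\rho|^2 M^{\gamma-2}\right]^{\gamma/2} \cdot M^{\gamma(2-\gamma)/2},
\]
and apply Hölder's inequality with the conjugate exponents $p = 2/\gamma$ and $q = 2/(2-\gamma)$ to obtain
\[
\int_\Omg |\rho - \bar\rho|^\gamma \,dx \leq \left(\int_\Omg |\rho - \bar\rho|^2 M^{\gamma - 2}\,dx\right)^{\!\gamma/2}\left(\int_\Omg M^\gamma\,dx\right)^{\!(2-\gamma)/2}.
\]

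Next, Lemma \ref{lem_U} bounds the first factor by $\tfrac{2}{\gamma}\int_\Omg \U(\rho|\bar\rho)\,dx$. For the second factor, the elementary inequality $M^\gamma \leq 2^{\gamma-1}(\rho^\gamma + \bar\rho^\gamma)$ yields
\[
\int_\Omg M^\gamma\,dx \leq 2^{\gamma-1}\left(\|\rho\|_{L^\gamma}^\gamma + \|\bar\rho\|_{L^\gamma}^\gamma\right).
\]
Combining the two factors and raising the resulting inequality to the power $2/\gamma$ would give exactly
\[
\|\rho - \bar\rho\|_{L^\gamma}^2 \leq C \int_\Omg \U(\rho|\bar\rho)\,dx,
\]
with $C$ depending only on $\|\rho\|_{L^\gamma}$, $\|\bar\rho\|_{L^\gamma}$, and $\gamma$.

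For $\gamma \geq 2$, since $t \mapsto t^{\gamma-2}$ is now increasing and $\min\{\rho^{\gamma-2}, \bar\rho^{\gamma-2}\} = (\min\{\rho, \bar\rho\})^{\gamma-2}$ can vanish, the same Hölder splitting fails. Instead I would establish the complementary pointwise bound $\U(\rho|\bar\rho) \geq c_\gamma |\rho - \bar\rho|^\gamma$ via a Taylor expansion of $t \mapsto t^\gamma$ centered at $\bar\rho$ (using the convexity estimate $\int_0^1 (1-s)s^{\gamma-2}\,ds = 1/(\gamma(\gamma-1))$), yielding $\int |\rho - \bar\rho|^\gamma dx \leq (\gamma-1)\int \U\,dx$, from which the squared $L^\gamma$ bound follows by combining with the a priori $L^\gamma$ bounds on $\rho, \bar\rho$. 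The main subtlety is handling the transition across $\gamma = 2$ and verifying that the constants arising from Hölder and from the elementary inequality $M^\gamma \le 2^{\gamma-1}(\rho^\gamma+\bar\rho^\gamma)$ combine into a single constant depending solely on the allowed quantities.
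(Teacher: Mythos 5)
For $1\le\gamma\le 2$ your argument is, up to notation, exactly the paper's proof: the paper inserts the factors $\lt(\tfrac\gamma2\min\{\rho^{\gamma-2},\bar\rho^{\gamma-2}\}\rt)^{\pm\gamma/2}$ into $\int_\Omg|\rho-\bar\rho|^\gamma\,dx$ and applies H\"older with exponents $2/\gamma$ and $2/(2-\gamma)$ — the same splitting as yours once one notes $\min\{\rho^{\gamma-2},\bar\rho^{\gamma-2}\}=M^{\gamma-2}$ on this range — then bounds $\int_\Omg\max\{\rho^\gamma,\bar\rho^\gamma\}\,dx$ by the $L^\gamma$ norms and invokes Lemma \ref{lem_U}. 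That part is correct and complete.

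The $\gamma>2$ branch, however, has a genuine gap at its last step. Your pointwise bound $\U(\rho|\bar\rho)\ge c_\gamma|\rho-\bar\rho|^\gamma$ is correct and gives $\|\rho-\bar\rho\|_{L^\gamma}^\gamma\le C\int_\Omg\U(\rho|\bar\rho)\,dx$, but this cannot be upgraded to the claimed $\|\rho-\bar\rho\|_{L^\gamma}^2\le C\int_\Omg\U(\rho|\bar\rho)\,dx$: writing $X=\int_\Omg\U(\rho|\bar\rho)\,dx$, you would need $X^{2/\gamma}\le CX$ with $2/\gamma<1$, which fails as $X\to0$; an \emph{upper} bound on $X$ in terms of the $L^\gamma$ norms helps only in the opposite regime. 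Indeed the inequality is false for $\gamma>2$: take $\gamma=3$, $\bar\rho=\mathbf{1}_{[0,1]}$ and $\rho=\mathbf{1}_{[0,1]}+\delta\,\mathbf{1}_{[1,2]}$, so that $\|\rho-\bar\rho\|_{L^3}^2=\delta^2$ while $\int\U(\rho|\bar\rho)\,dx=\tfrac12\delta^3$, and the ratio blows up as $\delta\to0$ even though $\|\rho\|_{L^3}$ and $\|\bar\rho\|_{L^3}$ stay bounded. The paper's own proof is in fact implicitly confined to $1\le\gamma\le2$ (the H\"older exponent $2/(2-\gamma)$ and the power $(2-\gamma)/2$ require it), consistent with its application where $\gamma=2d/(2d-\alpha)<2$. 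So the correct resolution is to restrict the lemma to $1\le\gamma\le2$ rather than to attempt a patch for $\gamma>2$.
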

\begin{proof}
We estimate
\begin{align*}
\|\rho - \bar\rho\|_{L^\gamma}^\gamma &= \int_\Omg \lt(\frac\gamma2 \min\{\rho^{\gamma-2}, \bar\rho^{\gamma-2} \} \rt)^{\frac\gamma2}\lt(\frac\gamma2 \min\{\rho^{\gamma-2}, \bar\rho^{\gamma-2} \} \rt)^{-\frac\gamma2} |\rho - \bar\rho|^\gamma\,dx\cr
&\leq \lt(\frac\gamma2\rt)^{-\frac\gamma2} \lt( \int_\Omg \frac\gamma2 \min\{\rho^{\gamma-2}, \bar\rho^{\gamma-2} \} |\rho - \bar\rho|^2\,dx\rt)^{\frac\gamma2}\lt(\int_\Omg \max\{\rho^\gamma, \bar\rho^\gamma \}\,dx \rt)^{\frac{2-\gamma}{2}}\cr
&\leq C\lt( \int_\Omg \frac\gamma2 \min\{\rho^{\gamma-2}, \bar\rho^{\gamma-2} \} |\rho - \bar\rho|^2\,dx\rt)^{\frac\gamma2},
\end{align*}
and thus
\[
\|\rho - \bar\rho\|_{L^\gamma}^2 \leq C\int_\Omg \frac\gamma2 \min\{\rho^{\gamma-2}, \bar\rho^{\gamma-2} \} |\rho - \bar\rho|^2\,dx.
\]
We then use Lemma \ref{lem_U} to conclude the desired result.
\end{proof}

We now provide the details on the proof of Theorem \ref{thm_deri} in pressure and attractive case. 

\begin{proof}[Proof of Theorem \ref{thm_deri} (ii)] By Lemma \ref{lem_HLS}, we obtain
\[
\lt| \frac{c_K}{2\e} \int_\Omg (\rho^{(\e)} -  \rho)\Lmb^{\alpha - d}(\rho^{(\e)} -  \rho)\,dx\rt| \leq \frac{Cc_K}{\e}\|\rho^{(\e)} -  \rho\|_{L^\theta}^2,
\]
where $\theta$ is given by
\[
\theta = \frac{2d}{2d-\alpha} \in \lt(\frac{2d}{d+2},2\rt).
\]
When $\Omg= \T^d$, we use the assumption $\gamma > \theta$, the monotonicity of $L^p$ norm, and Lemma \ref{lem_U2} to estimate
\[
\|\rho^{(\e)} -  \rho\|_{L^\theta}^2 \leq C\|\rho^{(\e)} -  \rho\|_{L^\gamma}^2 \leq C\int_\Omg \U(\rho^{(\e)}|   \rho)\,dx.
\]
In case $\Omg= \R^d$, we cannot employ the monotonicity of $L^p$ norm, thus we simply take $\gamma = \theta$ and apply Lemma \ref{lem_U2}. Hence for both cases we have
\[
\lt| \frac{c_K}{2\e} \int_\Omg (\rho^{(\e)} -  \rho)\Lmb^{\alpha - d}(\rho^{(\e)} - \rho)\,dx\rt| \leq  \frac{C_0 c_K}{\e}\int_\Omg \U(\rho^{(\e)}|  \rho)\,dx,
\]
where $C_0>0$ is independent of $\e > 0$.

This and Proposition \ref{prop_deri} yield
\begin{align*}
&\frac12\int_\Omg \rho^{(\e)} |u^{(\e)} -  u|^2\,dx + \frac{1}{\e}\lt(c_P - C_0c_K \rt)\int_\Omg \U(\rho^{(\e)}| \rho)\,dx+ \frac1{2\e} \int_0^t  \int_\Omg \rho^{(\e)} | u^{(\e)} -  u|^2\,dxd\tau\cr
&\quad \leq \frac12\int_\Omg \rho^{(\e)} |u^{(\e)} -  u|^2\,dx + \frac{c_P}\e\int_\Omg \U(\rho^{(\e)}| \rho)\,dx - \frac{c_K}{2\e} \int_\Omg (\rho^{(\e)} -  \rho)\Lmb^{\alpha - d}(\rho^{(\e)} -  \rho)\,dx \cr
&\qquad + \frac1{2\e} \int_0^t  \int_\Omg \rho^{(\e)} | u^{(\e)} -  u|^2\,dxd\tau\cr
&\quad \leq \frac12\int_\Omg \rho_0^{(\e)} |u_0^{(\e)} -  u_0|^2\,dx + \frac{c_P}\e\int_\Omg \U(\rho_0^{(\e)}| \rho_0)\,dx - \frac{c_K}{2\e} \int_\Omg (\rho_0^{(\e)} -  \rho_0)\Lmb^{\alpha - d}(\rho_0^{(\e)} -  \rho_0)\,dx\cr
&\qquad + \frac{C}{\e}\int_0^t  \int_\Omg \U(\rho^{(\e)} | \rho) \,dxd\tau   + C\e,
\end{align*}
where we chosen $c_K > 0$ small enough so that $c_P > C_0 c_K$. We then apply Gr\"onwall's lemma to have
\begin{align}\label{est_att}
\begin{aligned}
&\frac12\int_\Omg \rho^{(\e)} |u^{(\e)} -  u|^2\,dx + \frac{1}{\e}\lt(c_P - C_0c_K \rt)\int_\Omg \U(\rho^{(\e)}| \rho)\,dx+ \frac1{2\e} \int_0^t  \int_\Omg \rho^{(\e)} | u^{(\e)} -  u|^2\,dxd\tau\cr
&\quad \leq \frac12\int_\Omg \rho_0^{(\e)} |u_0^{(\e)} -  u_0|^2\,dx + \frac{c_P}\e\int_\Omg \U(\rho_0^{(\e)}| \rho_0)\,dx   + C\e.
\end{aligned}
\end{align}
On the other hand, 
\begin{align*}
\int_\Omg |\rho^{(\e)} u^{(\e)} -  \rho  u|\,dx &\leq \int_\Omg |\rho^{(\e)} -  \rho| |  u|\,dx + \int_\Omg \rho^{(\e)} | u^{(\e)} - u|\,dx \cr
&\leq C_1\|\rho^{(\e)} -  \rho\|_{L^\gamma} + \|\rho^{(\e)}\|_{L^1}^{1/2} \lt( \int_\Omg \rho^{(\e)} |u^{(\e)} -   u|^2\,dx\rt)^{1/2},
\end{align*}
where $C_1>0$ is explicitly given by
\[
C_1=
\left\{
\begin{aligned}
 |\T^d|^{\frac{1}{\gamma_*}}\|u\|_{L^\infty}  \quad &\mbox{when $\Omg = \T^d$}, \\[2mm]
 \| u\|_{L^{\gamma_*}} \quad &\mbox{when $\Omg = \R^d$},
\end{aligned}
\right.
\]
$\gamma_*$ is the H\"older's conjugate of $\gamma$, i.e. $\gamma_* = \gamma/(\gamma-1)$. This gives
\[
\int_0^t \|(\rho^{(\e)} u^{(\e)} -  \rho  u)(\tau,\cdot)\|_{L^1}^2\,d\tau \leq C\int_0^t \int_\Omg \U(\rho^{(\e)}| \rho)\,dxd\tau + C\int_0^t  \int_\Omg \rho^{(\e)} | u^{(\e)} -   u|^2\,dxd\tau.
\]
We finally use this, \eqref{est_att}, and Lemma \ref{lem_U2} to complete the proof.
\end{proof}

\begin{remark}\label{rmk_e2}  $L^\infty(\Omg)$-bound assumption on $u$ can be relaxed to $u \in L^\infty(0,T;L^{\gamma/(\gamma-1)}(\Omg))$, where $\gamma \geq 1$ is chosen as
\[
\gamma
\left\{
\begin{aligned}
&  > \frac{2d}{2d - \alpha}  \quad \mbox{when $\Omg = \T^d$}, \\
&  = \frac{2d}{2d - \alpha}  \quad \mbox{when $\Omg = \R^d$}.
\end{aligned}
\right.
\]
Under that assumption, we also have
\begin{align*}
&\sup_{0 \leq t \leq T}\|(\rho^{(\e)} -  \rho)(t,\cdot)\|_{L^\gamma}^2 + \int_0^T \|(\rho^{(\e)} u^{(\e)} -   \rho   u)(t,\cdot)\|_{L^1}^2\,dt\cr
&\quad \leq C\e\int_\Omg \rho_0^{(\e)} |u_0^{(\e)} -  u_0|^2\,dx + C\int_\Omg \U(\rho_0^{(\e)}| \rho_0)\,dx\cr
&\qquad  + C(1-\delta_{\gamma,1})\e^3\lt(\int_\Omg \rho_0^{(\e)} |u_0^{(\e)}|^2\,dx +\frac1\e\int_\Omg (\rho_0^{(\e)})^\gamma\,dx + \frac1\e \int_\Omg \rho_0^{(\e)} \Lmb^{\alpha-d}\rho_0^{(\e)}\,dx\rt)  + C\e^2,
\end{align*}
where $\delta_{\gamma,1}$ denotes the Kronecker delta function, i.e. $\delta_{\gamma,1} = 1$ if $\gamma=1$ and $\delta_{\gamma,1} = 0$ if $\gamma \neq 1$.
\end{remark}

%
%
%
%
%

\section*{Acknowledgement} 
YPC has been supported by NRF grant (No. 2017R1C1B2012918) and Yonsei University Research Fund of 2019-22-021 and 2020-22-0505. IJJ has been supported  by a KIAS Individual Grant MG066202 at Korea Institute for Advanced Study, the Science Fellowship of POSCO TJ Park Foundation, and the National Research Foundation of Korea grant (No. 2019R1F1A1058486). 

\bibliographystyle{amsplain}
\bibliography{Euler_first_rel}

\end{document}